\documentclass[12pt,a4paper,twoside]{article}


\textwidth 16cm \textheight 22cm
\topmargin -1cm
\oddsidemargin 0cm
\evensidemargin 0cm

\usepackage[latin1]{inputenc}
\usepackage[brazil,english]{babel}
\usepackage{amsfonts}
\usepackage{amssymb}
\usepackage{amsmath}
\usepackage{amsthm}
\usepackage[usenames,dvipsnames]{color}
\usepackage{enumitem}
\usepackage[normalem]{ulem}
\usepackage{setspace}

\numberwithin{equation}{section}
\theoremstyle{plain} 
\newtheorem{theorem}{Theorem}[section]
\newtheorem*{theorem*}{Theorem}

\newtheorem{lemma}[theorem]{Lemma}
\newtheorem*{lemma*}{Lemma}
\newtheorem{prop}[theorem]{Proposition}

\theoremstyle{definition} 

\theoremstyle{remark} 
\newtheorem{remark}[theorem]{Remark}

\makeatletter
\newcommand{\aslabel}[1]{#1\def\@currentlabel{#1}}
\newcommand{\nowlabel}[1]{\def\@currentlabel{#1}}
\makeatother

\newcommand{\R}{{\mathbb{R}}}
\newcommand{\N}{{\mathbb{N}}}
\newcommand{\C}{{\mathbb{C}}}
\newcommand{\Z}{{\mathbb{Z}}}

\newcommand{\tref}[1]{\ref{#1}}
\newcommand{\pref}[1]{(\ref{#1})}



\newcommand{\pt}[1]{\left({#1}\right)}
\newcommand{\pq}[1]{\left[{#1}\right]}
\newcommand{\pg}[1]{\left\{{#1}\right\}}

\newcommand{\m}[1]{\left|{#1}\right|}




\newcommand{\hsp}{\hspace*{.5cm}}


\pagestyle{myheadings}

\markboth{\today \hsp }{\today \hsp }



\usepackage{textcomp,amssymb,wasysym}

\definecolor{collnk}{rgb}{.2,0.2,.6}
\definecolor{colcit}{rgb}{.1,0.5,.2}
\usepackage[colorlinks=true]{hyperref}\hypersetup{urlcolor=blue, citecolor=colcit,linkcolor=collnk}

\newcommand{\D}{\mathbb{D}}

\newcommand{\DD}{ \varXi} 
\newcommand{\esfq}{\Omega_{2q}}
\newcommand{\esfp}{\Omega_{2p}}
\newcommand{\esfi}{\Omega_\infty}
\newcommand{\prode}{\Omega_{2q}\times\Omega_{2p}}
\newcommand{\Rmnq}{R_{m,n}^{q-2}}

\markboth{\today \hsp Prod }{\today \hsp Prod}

\markboth{M. H. Castro, E. Massa, and A. P. Peron}{Characterization of Strict Positive Definiteness on products of complex spheres}

\begin{document}
\title{Characterization of Strict Positive Definiteness on products of complex spheres}
\author{Mario H. Castro$^a$, Eugenio Massa$^b$, and Ana Paula Peron$^b$.
\\\scriptsize{$^a$Departamento de Matem\'{a}tica, UFU  - Uberlândia (MG), Brazil. }\\ \scriptsize{
$^b$Departamento de Matem\'{a}tica, ICMC-USP  - S\~{a}o Carlos, Caixa Postal 668, 13560-970  S\~{a}o Carlos (SP), Brazil.}\\ \scriptsize{
		e-mails: mariocastro@ufu.br,\,\,\,\,  eug.massa@gmail.com,\,\,\,\, apperon@icmc.usp.br.
}
}

\maketitle 


\begin{abstract}
In this paper we consider Positive Definite functions on products  $\prode$ of complex spheres, and we obtain a condition, in terms of the coefficients in their disc polynomial expansions, which is necessary and sufficient for the function to be Strictly Positive Definite.  The result includes also the more delicate cases in which $p$ and/or $q$ can be $1$ or $\infty$.

The condition we obtain states that a suitable set in $\Z^2$, containing the indexes of the strictly positive coefficients in the expansion, must intersect every product of arithmetic progressions.
%
\\
\noindent{\bf MSC:} 42A82; 42C10.
\\
\noindent{\bf Keywords:} Strictly Positive Definite Functions, Product of Complex Spheres,  Generalized Zernike Polynomial.
\end{abstract}

\section{Introduction}
The main purpose of this paper is to obtain a characterization of Strictly Positive Definite functions on products of complex spheres, in terms of the coefficients in their disc polynomial expansions: these results are contained in  the Theorems \ref{th_main}, \ref{th_main_1p} and \ref{th_main_11}.

Positive Definiteness and Strict Positive Definiteness are important in many applications, for example, Strict Positive Definiteness is required in certain interpolation problems in order to guarantee the unicity of their solution. 
From a theoretical point of view, the problem of characterizing both Positive Definiteness and Strict Positive Definiteness has been considered in many recent papers, in different contexts. More details on the applications and the literature related to this problem will be given in Section \tref{sec_liter}.  

\par\medskip

Let $\varOmega$ be a nonempty set. A kernel $K: \varOmega \times \varOmega \to \C$ is called {\em Positive Definite} (PD in the following) on $\varOmega$ when 
\begin{equation}\label{eq-quad-form-geral}
\sum_{\mu,\nu=1}^L c_\mu\overline{c_\nu} K(x_\mu,x_\nu) \geq 0, 
\end{equation}
for any $L\ge1$, $c=(c_1,\ldots,c_L) \in \C^L$ and any subset  $X:=\{x_1,\ldots,x_L\} $ of distinct points in $\varOmega$.   Moreover, $K$ is {\em Strictly Positive Definite} (SPD in the following) when it is Positive Definite and the inequality above is strict for  $c\neq0$.

   If $S^q$ is the $q$-dimensional unit sphere in the Euclidean space $\R^{q+1}$,  we say that a continuous function $f:[-1,1] \to \R$ is PD (resp. SPD) on $S^q$,  when the associated kernel $K(v,v'):=f(v\cdot_\R v')$ is PD (resp. SPD) on $S^q$ (here ``~$\cdot_\R$~"  is the usual inner product in $\R^{q+1}$). 
   In \cite{scho-42} it  was proved that 
     a continuous function  $f$ is PD on $S^q$, $q\geq1$,  if, and only if, it admits an expansion in the form
   \begin{equation}\label{eq-scho}\begin{array}{ccc}
   &\displaystyle f(t)=\sum_{m\in\Z_+} a_mP_m^{(q-1)/2}(t),\quad t\in[-1,1],&\\&\mbox{where $\sum a_mP_m^{(q-1)/2}(1)<\infty$ and $a_m\geq0$ for all $m\in\Z_+$. }\end{array}
   \end{equation}
   In  \pref{eq-scho}, $P_m^{(q-1)/2}$ are the Gegenbauer polynomials of degree $m$ associated to $(q-1)/2$ (see \cite[page 80]{szego}) and $\Z_+=\N\cup\pg{0}$. 
   In
   \cite{debao-sun-valdir} it was proved that the function $f$ in  \pref{eq-scho} is also SPD on $S^q$, $q\geq2$ if, and only if, the set 
   $
   \{m\in\Z_+: a_{m}>0\}
   $
   contains an infinite number of odd and of even numbers. This condition is equivalent to asking that 
   \begin{equation}\label{eq_inters_Sd}
   \{m\in\Z_+: a_{m}>0\}\cap (2\N+x)\neq \emptyset \qquad\mbox{for every $x\in\N$}.
       \end{equation}

    The complex case is defined in a similar way:  if $\esfq$ is the unit sphere in $\C^q$, $q\geq2,$ and $\D$ is the unit closed disc in $\C$, 
    then  a continuous function $f:\D \to \C$ is said to be PD (resp. SPD) on $\esfq$ if the associated kernel $K(z,z'):=f(z\cdot z')$ is PD (resp. SPD) on $\esfq$, where  ``~$\cdot$~"  is the  usual inner product in $\C^q$.  
    As proved in  \cite{P-valdir-pd-esfcompl},  a continuous function $f:\D \to \C$  is PD on $\esfq$, $q\geq2$ if, and only if, it has the representation in series of the form 
    \begin{equation}\label{eq-pd-esfq}\begin{array}{ccc}
     &\displaystyle f(\xi)=\sum_{m,n\in\Z_+} a_{m,n}R_{m,n}^{q-2}(\xi),\quad \xi\in{\D},&\\
    &\mbox{where $\sum a_{m,n}<\infty$ and $a_{m,n}\geq0$ for all $m,n\in\Z_+$.}&
\end{array}
     \end{equation}
    The functions $R_{m,n}^{q-2}$ in \pref{eq-pd-esfq} are the  {\em disc polynomials}, or {\em generalized Zernike polynomials} (see Equation \pref{eq-def-pol-disc}). 
     The condition for $f$ to be SPD was obtained in  \cite{meneg-jean-traira,P-valdir-complexapproach}: 
    $f$ as in \pref{eq-pd-esfq}  is SPD on $\esfq$ if, and only if,
    the set
    $
    \{m-n\in\Z: a_{m,n}>0\}
    $
    intersects every full arithmetic progression in $\Z$, that is, 
      \begin{equation}\label{eq_inters_Oq}
     \{m-n\in\Z: a_{m,n}>0\}\cap (N\Z+x) \neq \emptyset
     \qquad\mbox{for every $N,x\in\N$.}
     \end{equation}
    
    The characterization of SPD functions on the spheres $S^1$, $\Omega_2$, $S^\infty$  and $\Omega_\infty$ were also considered in  \cite{P-valdir-claudemir-spd-compl,men-spd-analysis,meneg-jean-traira}, obtaining similar results (see also in Section \ref{sec_S1}).

    Products of real spheres where considered in \cite{P-jean-men-pdSMxSm, jean-men, P-jean-menS1xSm,	P-jean-menS1xS1}:
    a continuous PD function on $S^q\times S^p$, $q,p\geq1$ can be written as 
    \begin{equation}\label{eq-pd-esfSd}\begin{array}{c}
   \displaystyle  f(t,s)=\sum_{m,k\in\Z_+} a_{m,k}P_m^{(q-1)/2}(t)P_k^{(p-1)/2}(s),\quad t,s\in[-1,1],\\ 
   \mbox{where  $\sum a_{m,k}P_m^{(q-1)/2}(1)P_k^{(p-1)/2}(1)<\infty$ and $a_{m,k}\geq0$ for all $m,k\in\Z_+$,}
\end{array} 
   \end{equation}
   and, for $q,p\geq2$, it is also SPD on $S^q\times S^p$ if, and only if, the following condition, obtained in \cite{jean-men}, holds true:  in each intersection of the set 
      $
      \{(m,k)\in\Z_+^2: a_{m,k}>0\}
      $
     with the four sets $(2\Z_++x)\times(2\Z_++y),\ x,y\in\pg{0,1}$, there exists a sequence $(m_i,k_i)$ such that $m_i,k_i\to \infty$. 
     In fact, this condition is equivalent to the following one:
       \begin{equation}\label{eq_inters_SpSq}
    \{(m,k)\in\Z_+^2: a_{m,k}>0\}\cap (2\N+x)\times(2\N+y)\neq \emptyset
          \qquad\mbox{for every $x,y\in\N$.}
          \end{equation}

      Again, when considering $S^1$ in the place of $S^q$ and/or $S^p$, similar (but not analogous) results are obtained: see \cite{ P-jean-menS1xSm,	P-jean-menS1xS1}  and Section \ref{sec_S1}.

   \subsection{Main results}
The purpose of this paper is to consider the same kind of problems described above for the case of the   products $\prode$ of two complex spheres.  

The characterization of Positive Definiteness in this setting was obtained 
 in \cite[Theorem 7.1]{P-berg-porcu_gelfand} for $ q,p\in\N,\;q,p\geq2$: it was proved that  a continuous function $f:\D \times \D \to \C$  is PD on $\prode$   if, and only if, it admits an expansion in the form 
\begin{equation}\label{eq-pd-prod-esf-compl} 
\begin{array}{ccc}
&\displaystyle f(\xi,\eta)=\sum_{m,n,k,l\in\Z_+} a_{m,n,k,l}R_{m,n}^{q-2}(\xi)R_{k,l}^{p-2}(\eta),\quad (\xi,\eta)\in{\D}\times{\D},&\\
&\mbox{where $\sum a_{m,n,k,l}<\infty$ and $a_{m,n,k,l}\geq0$ for all $m,n,k,l\in\Z_+$.}&\end{array}
\end{equation}
If $p$ and/or $q$ can take the values $1$ or $\infty$, a characterization of Positive Definiteness is also known (see in Section \ref{sec_PD_on_prod}), except for the case $p=q=\infty$, which we address in Theorem \ref{th_PDinfty}. 
In fact, if we define $ R^{\infty}_{m,n}(\xi):={\xi\vphantom{\overline\xi}}^{m}\overline\xi^{n},\ \xi\in\D$, then the characterization \pref{eq-pd-prod-esf-compl} holds for $ q,p\in\N\cup\pg{\infty},\;q,p\geq2$.

Our main results are contained in the following theorems, where we characterize SPD functions on the product of two complex spheres $\prode$, $q,p\in\N\cup\pg{\infty}$, in terms of the coefficients in their expansions. 
\begin{theorem}\label{th_main}
 Let $ q,p\in\N\cup\pg{\infty},\ q,p\geq2$. A continuous function $f:\D \times \D \to \C$, which is PD   on $\prode$,  is also SPD  on $\prode$ if, and only if, considering its expansion as in \pref{eq-pd-prod-esf-compl}, the set $$J':=\pg{(m-n,k-l)\in\Z^2:\ a_{m,n,k,l}>0}$$ intersects every product of full arithmetic progressions in $\Z$, 
	that is, 
	  \begin{equation}\label{eq_inters_Opq}
	J'\cap (N\Z+x)\times (M\Z+y)\neq \emptyset\qquad \mbox{for every $N,M,x,y\in\N$.}
	\end{equation}
\end{theorem}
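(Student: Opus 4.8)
The plan is to transfer the quadratic form to a trigonometric series on the torus $T^2=\partial\D\times\partial\D$ by combining the addition formula for the disc polynomials with the torus equivariance $\Rmnq(\zeta w)=\zeta^{m-n}\Rmnq(w)$ for $\m{\zeta}=1$ (and similarly for $\Rklp$); this holds because a harmonic of bidegree $(m,n)$ on $\esfq$ is multiplied by $\zeta^{m-n}$ under $\xi\mapsto\zeta\xi$. Fixing distinct points $(\xi_\mu,\eta_\mu)\in\prode$ and $c\in\C^L$, set
\[
F(\zeta,\omega):=\sum_{\mu,\nu}c_\mu\overline{c_\nu}\,f\pt{\zeta\,\xi_\mu\cdot\xi_\nu,\ \omega\,\eta_\mu\cdot\eta_\nu},\qquad(\zeta,\omega)\in T^2 .
\]
Expanding $f$ and using the equivariance yields $F(\zeta,\omega)=\sum_{s,t\in\Z}A_{s,t}\,\zeta^s\omega^t$ with $A_{s,t}=\sum_{m-n=s,\,k-l=t}a_{m,n,k,l}T_{m,n,k,l}$, where, via the addition formula and orthonormal harmonic bases $Y_{m,n,j},Y_{k,l,j'}$, one has $T_{m,n,k,l}=\sum_{j,j'}\m{\sum_\mu c_\mu Y_{m,n,j}(\xi_\mu)Y_{k,l,j'}(\eta_\mu)}^2\ge0$ (up to positive normalisation constants). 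Hence every $A_{s,t}\ge0$, the support $\pg{(s,t):A_{s,t}>0}$ lies in $J'$, and the quadratic form equals $F(1,1)=\sum_{s,t}A_{s,t}$. Therefore $f$ fails to be SPD for this configuration exactly when all $A_{s,t}=0$, i.e. when $a_{m,n,k,l}>0$ forces $T_{m,n,k,l}=0$.

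For necessity I argue contrapositively: if $J'\cap\pt{(N\Z+x)\times(M\Z+y)}=\emptyset$ for some $N,M,x,y\in\N$, I produce a nonzero vanishing configuration. Choosing any $z_0\in\esfq$, $w_0\in\esfp$, I take the distinct torus-orbit points $\xi_\alpha=e^{2\pi i\alpha/N}z_0$ and $\eta_\beta=e^{2\pi i\beta/M}w_0$ with $c_{\alpha,\beta}=e^{-2\pi i(\alpha x/N+\beta y/M)}$. Since $\xi_\alpha\cdot\xi_{\alpha'}=e^{2\pi i(\alpha-\alpha')/N}$ and likewise for $\eta$, a direct computation gives
\[
\sum_{\alpha,\alpha',\beta,\beta'}c_{\alpha,\beta}\overline{c_{\alpha',\beta'}}\,f\pt{\xi_\alpha\cdot\xi_{\alpha'},\eta_\beta\cdot\eta_{\beta'}}=N^2M^2\!\!\sum_{\substack{m-n\equiv x\,(N)\\ k-l\equiv y\,(M)}}\!\!a_{m,n,k,l}\,\Rmnq(1)\Rklp(1),
\]
because the geometric sums in $\alpha$ and $\beta$ select precisely the residues $m-n\equiv x\,(N)$ and $k-l\equiv y\,(M)$. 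By hypothesis every surviving coefficient vanishes, so the form is $0$ while $c\neq0$; thus $f$ is not SPD. The values $\Rmnq(1),\Rklp(1)$ are positive, and the same computation is valid when $q$ and/or $p$ equals $\infty$, using $R^\infty_{m,n}(\xi)=\xi^m\overline\xi^{\,n}$.

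For sufficiency I assume the intersection condition and suppose, for contradiction, that $f$ is not SPD, so there are distinct points and $c\neq0$ with all $A_{s,t}=0$; by the first paragraph this forces $T_{m,n,k,l}=0$ throughout $J'$. The crux is a separation statement I would isolate as a lemma: for distinct points and $c\neq0$ there exist $N,M,x,y\in\N$ such that $T_{m,n,k,l}>0$ for every sufficiently large $(m,n,k,l)$ with $m-n\equiv x\,(N)$ and $k-l\equiv y\,(M)$. Granting this, I first upgrade the intersection condition to the observation that $J'$ meets each product of progressions in points with arbitrarily large coordinates (refine $N,M$ to subprogressions avoiding any prescribed finite set). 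Combining: for the progression supplied by the lemma, the refined condition yields $(m,n,k,l)$ with $a_{m,n,k,l}>0$, with $(m-n,k-l)$ in that progression, and large; whence $T_{m,n,k,l}>0$ and so $A_{m-n,\,k-l}>0$, contradicting $F(1,1)=0$.

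The main obstacle is precisely this separation lemma. Its proof rests on the weight grading: under the torus action a harmonic of bidegree $(m,n)$ on $\esfq$ carries the character $\zeta^{m-n}$, and the weight $m-n$ is additive in the Clebsch--Gordan decomposition of products of harmonics, so multiplication by the coordinate harmonics (of weights $\pm1$) shifts weights within a fixed residue class. I plan to prove it by projecting the finitely supported measure $\sum_\mu c_\mu\delta_{(\xi_\mu,\eta_\mu)}$ onto torus isotypic components and running an induction on the number $L$ of points: density of harmonics in $C(\prode)$ makes some moment $T_{m_0,n_0,k_0,l_0}$ nonzero, and the additive weight structure together with the density of torus orbits lets one propagate nonvanishing to all large weights in a suitable product of residue classes, the degenerate configurations — where the evaluation functionals on a graded piece become proportional — being absorbed by reducing $L$. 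The cases $q,p=\infty$ are treated uniformly, the monomials $\xi^m\overline\xi^{\,n}$ replacing the bidegree-$(m,n)$ harmonics with identical weight bookkeeping.
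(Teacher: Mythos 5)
Your necessity argument is essentially the paper's own (Proposition \ref{th_spd->progr}): the same torus-orbit configuration, the same coefficient vector $c$, the same geometric-sum computation selecting the residues $m-n\equiv x\ (N)$, $k-l\equiv y\ (M)$. That half is correct, and your reformulation of the quadratic form as $\sum_{(m,n,k,l)\in J}a_{m,n,k,l}T_{m,n,k,l}$ with $T_{m,n,k,l}\geq0$ via the addition formula is a legitimate variant of the paper's decomposition $A_X=\sum a_{m,n,k,l}B_X^{m,n,k,l}$.

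The sufficiency half, however, has a genuine gap: everything is made to rest on your ``separation lemma'' (for distinct points and $c\neq0$ there exist $N,M,x,y$ with $T_{m,n,k,l}>0$ for all sufficiently large indices in the corresponding residue classes), and you do not prove it --- you only announce a plan. That lemma \emph{is} the theorem: it bundles the two real difficulties, and your sketch engages with neither. First, when the configuration contains antipodal pairs $\xi_\mu=e^{i\theta}\xi_\nu$, the relevant moments degenerate into exponential sums $L(\alpha,\beta)=\sum_{\tau,\sigma}c^{\tau,\sigma}e^{i\alpha\theta_\tau}e^{i\beta\delta_\sigma}$ in the weights $(\alpha,\beta)=(m-n,k-l)$, and one must show that the zero set of such a sum avoids some full product of arithmetic progressions. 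This is not a soft ``propagation'' fact obtainable from Clebsch--Gordan weight bookkeeping or induction on $L$: the zero set of $L$ on $\Z^2$ is controlled only by Laurent's two-variable generalization of the Skolem--Mahler--Lech theorem \cite{Laurent89}, which the paper imports through \cite{P-jean-menS1xS1}, and locating a surviving progression is precisely the number-theoretic content your outline omits. Second, for the non-antipodal part one needs the decay $R_{m,n}^{q-2}(\xi)\to0$ for $\m{\xi}<1$ (Lemma \ref{lm_Rto0}) to make the Gram-type matrix strictly diagonally dominant for large $m+n$, $k+l$; your appeal to density of harmonics produces \emph{one} nonzero moment, not cofinally many within a prescribed product of residue classes, which is what the intersection hypothesis forces you to match. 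The paper disentangles these two issues by rewriting an arbitrary configuration as an enhanced torus orbit of an antipodal-free set; without that reduction, or an equivalent, your induction has no mechanism for configurations in which several points share a torus orbit. As written, the sufficiency direction is a statement of intent rather than a proof.
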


It is worth noting the similarities between the characterizations of SPD in the various cases described here, actually, they can  always be reduced to a condition on the intersection between a set constructed with the indexes of the nonnegative coefficients in the expansion  of the function, and certain arithmetic progressions or products of them: compare the conditions (\ref{eq_inters_Sd}-\ref{eq_inters_Oq}-\ref{eq_inters_SpSq}-\ref{eq_inters_Opq}).

\par\smallskip

When $p$ and/or $q$ can take the value $1$,
we obtain the  following characterizations.
 \begin{theorem}\label{th_main_1p}
 	Let $2\leq p\in\N\cup\pg{\infty}$. A continuous function $f:\partial\D \times \D \to \C$, which is PD   on $\Omega_2\times\esfp$,  is also SPD  on $\Omega_2\times\esfp$ if, and only if, considering its expansion as
 	\begin{equation}\label{eq-pd-prod-esf-complO2p}\begin{array}{rcl}
 	&\displaystyle f(\xi,\eta)=\sum_{m\in\Z,\,k,l\in\Z_+} a_{m,k,l}\xi^mR_{k,l}^{p-2}(\eta),\quad (\xi,\eta)\in\partial{\D}\times{\D},&\\
 	&\mbox{where $\sum a_{m,k,l}<\infty$ and $a_{m,k,l}\geq0$ for all $m\in\Z,\,k,l\in\Z_+$,}&
 	\end{array}
 	\end{equation} the set $$\pg{(m,k-l)\in\Z^2:\ a_{m,k,l}>0}$$ intersects every product of full arithmetic progressions in $\Z$.
 \end{theorem}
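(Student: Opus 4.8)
The plan is to reduce Theorem \ref{th_main_1p} to the already-characterized case of Theorem \ref{th_main}, by treating the circle $\Omega_2 = \partial\D$ as a limiting or degenerate complex sphere. The key structural observation is that the expansion \pref{eq-pd-prod-esf-complO2p} on $\partial\D \times \D$ has the same form as \pref{eq-pd-prod-esf-compl} on $\prode$, except that on the unit circle the monomials $\xi^m$ (with $m$ ranging over all of $\Z$) replace the disc polynomials $R^{q-2}_{m,n}$. Indeed, on $\partial\D$ we have $|\xi|=1$, so $\xi \overline\xi = 1$ and the two-index family $R^{q-2}_{m,n}$ collapses onto the single index $m-n$; thus the natural index set living in $\Z^2$ is already $\{(m, k-l)\}$ rather than $\{(m-n,k-l)\}$. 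First I would make this correspondence precise, recalling from the earlier cited work (the $\Omega_2$ analogue of \cite{P-valdir-complexapproach}) how the characters $\xi^m$ on the circle arise as the restriction or degeneration of the disc polynomials, so that the SPD condition on the first factor is governed by the single integer $m$.

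\medskip

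The core of the argument follows the standard scheme used throughout this literature for passing from Positive Definiteness to Strict Positive Definiteness. Writing out the quadratic form \pref{eq-quad-form-geral} for a finite set of distinct points $(\xi_\mu,\eta_\mu) \in \partial\D\times\D$ and coefficients $c_\mu$, one substitutes the expansion \pref{eq-pd-prod-esf-complO2p} and uses the addition-type formulas for the monomials on $\Omega_2$ and for the disc polynomials $R^{p-2}_{k,l}$ on $\esfp$. This rewrites the quadratic form as a sum over the index set of nonnegative coefficients $a_{m,k,l}$, each multiplied by a manifestly nonnegative quantity (a squared modulus arising from the reproducing structure). The form vanishes for some $c\neq 0$ precisely when, for every index $(m,k,l)$ with $a_{m,k,l}>0$, the corresponding vector of evaluated basis functions is orthogonal to $c$. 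Non-SPD is therefore equivalent to the existence of $c\neq 0$ annihilating all the active basis vectors, and the combinatorial condition on $J := \{(m,k-l)\} $ intersecting every product of arithmetic progressions is exactly what rules this out. The necessity direction is the easier one: if $J$ misses some product $(N\Z+x)\times(M\Z+y)$, I would construct an explicit nonzero $c$ (built from roots of unity in the $\xi$ variable and from the known non-SPD constructions on $\esfp$) that makes the form vanish, exploiting the periodicity modulo $N$ and modulo $M$.

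\medskip

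For sufficiency, assuming the intersection condition \pref{eq_inters_Opq} (with the first progression $N\Z+x$ and second $M\Z+y$), I would argue that no nonzero $c$ can annihilate all active basis vectors. Here the one-dimensional torus character theory enters through the first factor: a trigonometric-polynomial/almost-periodic vanishing argument shows that if the combination vanishes for all active $m$ that fill every residue class pattern, then all $c_\mu$ grouped by the $\xi$-coordinate must vanish; simultaneously, the SPD characterization on $\esfp$ from \pref{eq_inters_Oq} (applied in the $k-l$ variable) forces the collapse in the second factor. Combining the two orthogonality constraints over a set $J$ that is rich enough to meet every $(N\Z+x)\times(M\Z+y)$ yields $c=0$. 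The main obstacle I expect is the delicate handling of the $\Omega_2 = S^1$ factor: because $m$ now ranges over all of $\Z$ and the first-factor condition uses the \emph{full} arithmetic progression $N\Z+x$ (rather than the one-sided $2\N+x$ seen in the real-sphere cases \pref{eq_inters_Sd} and \pref{eq_inters_SpSq}), one must verify that the finiteness of the point set and the asymmetry between the circle and the higher complex sphere do not spoil the separation of variables. Controlling this interaction — ensuring the two vanishing mechanisms can be run independently and then glued — is where the technical care lies, and it is precisely the reason the $p=1$ or $q=1$ situations require a separate theorem rather than a direct corollary of Theorem \ref{th_main}.
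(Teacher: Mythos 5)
Your opening reduction is exactly the paper's strategy: Lemma \ref{lm_charDD} rewrites the expansion \pref{eq-pd-prod-esf-complO2p} in the unified form \pref{eq-pd-prod-esf-compl_DD} by setting $a_{m,0,k,l}:=a_{m,k,l}$ for $m\geq0$ and $a_{0,-m,k,l}:=a_{m,k,l}$ for $m<0$ (so that $a_{m,n,k,l}=0$ when $mn>0$ and $R^{-1}_{m,n}(\xi)=\xi^m\overline\xi^n$ on $\partial\D$), after which the set $\{(m-n,k-l)\}$ is precisely $\{(m,k-l)\}$ and Propositions \ref{th_spd->progr} and \ref{th_progr->spd} apply verbatim. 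Your necessity direction (roots of unity in both variables) matches Proposition \ref{th_spd->progr}.

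The genuine gap is in the sufficiency direction. You propose to run a vanishing argument on the circle factor and, ``simultaneously,'' invoke the single-sphere SPD criterion \pref{eq_inters_Oq} on the $\esfp$ factor in the variable $k-l$, and then glue the two. This separation of variables cannot work as stated: the system $\overline c^{\,t}B_X^{m,n,k,l}c=0$ for $(m,n,k,l)\in J$ couples the two factors through a Schur product, and the hypothesis is a single joint condition on $J'\subseteq\Z^2$ (one point in each \emph{product} of progressions), not a pair of conditions on the two projections of $J'$. A set such as $J'=\{(n,n):n\in\Z\}$ projects onto all of $\Z$ in each coordinate, so both one-factor criteria would be satisfied, yet it misses $(2\Z)\times(2\Z+1)$ and the corresponding $f$ is not SPD; any argument that treats the factors independently therefore proves too much. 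What actually closes the proof is an irreducibly two-variable chain: (a) reduction to enhanced sets built over an antipodal-free set $Y$ (Lemmas \ref{lm_SsubEnh} and \ref{lm_SDP_Xenh}; for $q=1$, Remark \ref{rm_antip1} forces all first coordinates of $Y$ to coincide); (b) the two-variable exponential-sum non-vanishing result from \cite{P-jean-menS1xS1}, resting on Laurent's theorem, which produces a full product of progressions $P$ on which $bc_1\neq0$; (c) Lemma \ref{lm_int_inf_2}, which upgrades \pref{eq_inters} to the existence of points of $J'\cap P$ with \emph{both} coordinates arbitrarily large; and (d) Lemma \ref{lm_PDlimit}, the asymptotic strict diagonal dominance of the residual matrix over $Y$. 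Steps (b)--(d) are exactly the ``gluing'' you defer, and they are the substance of the argument. Incidentally, your closing remark is backwards: the paper's point is that, unlike the real case of $S^1\times S^m$, the case $q=1$ here does \emph{not} require a separate argument --- after the reindexing of Lemma \ref{lm_charDD} it is literally the same proof as Theorem \ref{th_main}.
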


 \begin{theorem}\label{th_main_11}
 A continuous function $f:\partial \D \times \partial\D \to \C$, which is  PD   on $\Omega_2\times\Omega_2$,  is also SPD  on $\Omega_2\times\Omega_2$ if, and only if, considering its expansion as
 	\begin{equation}\label{eq-pd-prod-esf-complO2}\begin{array}{rcl}
 	&\displaystyle f(\xi,\eta)=\sum_{m,k\in\Z} a_{m,k}\xi^m\eta^k,\quad (\xi,\eta)\in\partial{\D}\times\partial{\D},&\\
 	&\mbox{where $\sum a_{m,k}<\infty$ and $a_{m,k}\geq0$ for all $m,k\in\Z$,}&\end{array}
 	\end{equation} the set $$\pg{(m,k)\in\Z^2:\ a_{m,k}>0}$$ intersects every product of full arithmetic progressions in $\Z$.
 \end{theorem}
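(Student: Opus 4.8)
The plan is to translate the (S)PD conditions into a statement about exponential sums on $\Z^2$ and then to analyze the \emph{exact} zero set of such a sum. For a finite set of distinct points $x_\mu=(\xi_\mu,\eta_\mu)\in\partial\D\times\partial\D$ and $c\in\C^L$, using $K(x_\mu,x_\nu)=f(\xi_\mu\overline{\xi_\nu},\eta_\mu\overline{\eta_\nu})$ together with \pref{eq-pd-prod-esf-complO2}, the quadratic form \pref{eq-quad-form-geral} becomes
\[
\sum_{\mu,\nu=1}^L c_\mu\overline{c_\nu}\,K(x_\mu,x_\nu)=\sum_{m,k\in\Z}a_{m,k}\,\Big|\sum_{\mu=1}^L c_\mu\,\xi_\mu^m\eta_\mu^k\Big|^2 .
\]
Writing $J:=\pg{(m,k)\in\Z^2: a_{m,k}>0}$ and $g(m,k):=\sum_\mu c_\mu\xi_\mu^m\eta_\mu^k$, this shows that PD is exactly $a_{m,k}\ge0$ (already assumed), and that $f$ \emph{fails} to be SPD if and only if there exist distinct points and some $c\neq0$ with $g(m,k)=0$ for all $(m,k)\in J$. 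Everything thus reduces to deciding when vanishing of the exponential sum $g$ on $J$ forces $c=0$.

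For necessity I would argue by contraposition. If the intersection condition fails, I pick $N,M,x_0,y_0\in\N$ with $J\cap[(N\Z+x_0)\times(M\Z+y_0)]=\emptyset$, take the $NM$ distinct points $(\omega_N^{a},\omega_M^{b})$ with $\omega_N=e^{2\pi i/N}$, $\omega_M=e^{2\pi i/M}$, and set $c_{a,b}=\omega_N^{-ax_0}\omega_M^{-by_0}$. A direct computation then gives $g(m,k)=NM\cdot\mathbf 1_{\{m\equiv x_0\,(N)\}}\,\mathbf 1_{\{k\equiv y_0\,(M)\}}$, which vanishes on $J$ precisely because $J$ avoids that residue class, while $c\neq0$; hence $f$ is not SPD. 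This is the clean finite-Fourier (DFT on $\Z_N\times\Z_M$) part of the argument.

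For sufficiency I would again argue by contraposition: assuming $c\neq0$ and $g|_J\equiv0$ for some distinct points, I must contradict the intersection hypothesis. Since distinct points give distinct characters of $\Z^2$, linear independence yields $g\not\equiv0$. The goal is then to produce one product of arithmetic progressions on which $g$ never vanishes: if $P=(N\Z+x)\times(M\Z+y)$ has $g|_P\neq0$ everywhere, then $J\cap P\neq\emptyset$ (the hypothesis) supplies a point of $J$ where $g\neq0$, contradicting $g|_J\equiv0$. The key structural input is that the exact zero set $Z(g)=\pg{(m,k):g(m,k)=0}$ of a nonzero exponential sum is a finite union of cosets of subgroups of $\Z^2$ (the one-variable case is the Skolem--Mahler--Lech theorem; the two-variable statement is Laurent's theorem on $\mathbb G_m^2$). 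Granting this, since $g\not\equiv0$ we have $Z(g)\neq\Z^2$, and a pigeonhole/CRT argument on the finitely many cosets produces the desired $P$: a coset of infinite index (rank $\le 1$) reduces modulo $(N,M)$ to a proper cyclic coset of $\Z_N\times\Z_M$, negligible for large $N,M$, while a finite-index coset is avoided by choosing a residue outside it; a residue $(x,y)$ missing all of them gives a product progression disjoint from $Z(g)$.

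The main obstacle is exactly this control of the \emph{exact} zeros of $g$ in two variables. Mean-value (Parseval) estimates along $P$ only show $g$ is not identically zero there, which is insufficient, since $J$ might sit precisely on the sparse zeros inside $P$; one genuinely needs the algebraic description of $Z(g)$. If a self-contained treatment is preferred to invoking Laurent's theorem, I would instead induct on the rank of the finitely generated subgroup of $\partial\D\times\partial\D$ generated by the $(\xi_\mu,\eta_\mu)$: the torsion (roots-of-unity) part is handled by the DFT computation above, since restricting to a product progression whose modulus kills the torsion turns $g$ into a genuinely periodic sum whose nonvanishing residue class supplies $P$, while the free part is handled by Weyl equidistribution together with the one-variable Skolem--Mahler--Lech theorem applied along the progression, peeling off one independent generator at a time.
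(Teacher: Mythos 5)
Your proposal is correct and follows essentially the same route as the paper: the necessity direction is the paper's Proposition \ref{th_spd->progr} specialized to $\Omega_2\times\Omega_2$ (roots-of-unity points plus the discrete-Fourier vector $c$), and the sufficiency direction rests on exactly the key input the paper imports from \cite{P-jean-menS1xS1} (Theorem 2.4 and Lemmas 2.5--2.6 there), namely that a nonzero exponential sum on $\Z^2$ is nonvanishing on some full product of arithmetic progressions, proved via Laurent's extension of Skolem--Mahler--Lech --- the paper's additional machinery (antipodal-free and enhanced sets, Lemma \ref{lm_PDlimit}) degenerates in this case to the single term $|bc_1|^2$, as noted in the remark following the proof of Proposition \ref{th_progr->spd}. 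The only place requiring more care than you give it is the passage from ``$Z(g)$ is a finite union of proper cosets, not all of $\Z^2$'' to ``some product progression misses $Z(g)$'': one should first fix a point off $Z(g)$ and a modulus $D$ reducing the finite-index cosets to avoided residue classes, and then enlarge the modulus to beat the $O(T)$-versus-$T^2$ count for the rank-one cosets; this is precisely the content of the cited lemmas.
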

 
We observe that the   Theorems  \ref{th_main_1p} and  \ref{th_main_11} 
 will follow immediately from the same proof as Theorem  \ref{th_main}, after  rewriting the expansions \pref{eq-pd-prod-esf-complO2p} and \pref{eq-pd-prod-esf-complO2} in order to be formally identical to \pref{eq-pd-prod-esf-compl} (see Lemma \ref{lm_charDD}). This is a remarkable fact considering that, in the real case, when the product involves the sphere $S^1$ (see \cite{P-jean-menS1xS1,P-jean-menS1xSm}) one had to use quite  different arguments with respect to  the higher dimensional case in \cite{jean-men}.
   We remark however that Theorem  \ref{th_main_11} is not new, as it is a particular case of the main result in \cite{men-gue-toro}.    
 
 \par\medskip
 
 This paper is organized in the following way. In  Section  \ref{sec_liter} we discuss some further literature related to our problem.  
   In Section \ref{sec_teoria} we set our notation and discuss some known results that will be used later.
  Theorem  \ref{th_main} is proved in Section \ref{sec_proofmain}. 
    In Section \ref{sec_infty} we state and prove the mentioned  characterization of PD functions on $\esfi\times\esfi$. Finally,  Section \ref{sec_S1} is devoted to  showing how one can deduce, from Theorem \ref{th_main_11},  the characterization of SPD functions on $S^1\times S^1$ proved in \cite{P-jean-menS1xS1}.

\subsection{Literature}\label{sec_liter}
Since the first results on  
 Positive Definite functions on real spheres,  obtained by Schoenberg in his seminal paper  (\cite{scho-42}), such functions were found to be relevant and have been studied in several distinct areas. In fact, they are both used by researchers directly interested in applied sciences, such as geostatistics, numerical analysis, approximation theory  (cf. \cite{cheney-approx-pd, CheneyLight-book, gneiting-2013, porcu-bev-gent}), and by theoretical researchers aiming at further generalizations that, along with their theoretical importance, could become useful in other practical problems. 
 
 One important motivation for characterizing  Strictly Positive Definite functions comes from  certain interpolation problems, where the interpolating function is generated by a Positive Definite kernel. Actually, the unicity of the solution of the interpolation problem is guaranteed only if the generating kernel is also Strictly Positive Definite (cf. \cite{light-cheney,cheney-xu}): consider, for instance, the interpolation function 
 $$
 F(x) = \sum_{j=1}^Lc_jK(x,x_j), \quad x\in \varOmega,
 $$
 where $X=\{x_1,\ldots,x_L\} \subset \varOmega$ is given and   $K$ is a known Strictly Positive Definite kernel in $\varOmega$; then the matrix of the system obtained from the interpolation conditions  $F(x_i)=\lambda_i$, $i=1,\ldots,L$, is the  matrix $[K(x_i,x_j)]$, whose determinant is positive, thus giving a unique solution for the system. 
In particular, the case where $\varOmega$  is a real sphere is very important in applications where one needs to 
assure unicity for interpolation problems with datas given on the Earth surface (which can be identified with the real sphere $S^2$).
 Also, the case where $\varOmega$ is the product of a sphere with some other set 
turns out to be  of particular interest for its application to geostatistical problems  in space and time, whose natural domain is $S^2\times \R$ (see \cite{porcu-bev-gent} and references therein).
Immediate applications in the case of complex spheres are less obvious: we  refer to \cite{P-massa-porcu-montee}, where parametric families of Positive Definite functions on complex spheres are provided. It is also worth noting that  the Zernike polynomials are used in applications such as   optics and optical engineering (cf. \cite{ramos-et-al-zernike-opt,torre} and references therein). 

Motivated by these and other  applications, several papers appeared dealing with the theoretical problem of characterizing  Positive Definiteness and Strict Positive Definiteness:
along with those already mentioned in the introduction, we cite \cite{musin-multi-pd}, where a characterization of  real-valued multivariate Positive Definite functions on $S^q$ is obtained, and \cite{ yaglom,hannan,men-rafaela}, where   matrix-valued Positive Definite functions are investigated. 

In \cite{porcu-berg},  the characterization in \cite{scho-42} is extended to the case of  Positive Definite  functions on the cartesian  product of $S^q$ times a locally compact group $G$, 
which includes the mentioned case  $S^q\times \R$
and also generalizes the  result obtained in \cite{P-jean-men-pdSMxSm} about Positive Definite functions on products of real spheres. 
Also,  the Positive Definite functions on Gelfand pairs and on products of them were characterized in \cite{P-berg-porcu_gelfand}, 
while those  on the product of a locally compact group with $\esfi$ in \cite{P-berg-porcu-Omega-inf}.

Concerning the characterization of Strictly Positive Definite functions, we cite also the cases of compact two-point homogeneous spaces and    products of them (\cite{barbosa-men, men-victor-prod-esf-esp_homg}) 
and the case of a torus (\cite{men-gue-toro}).

\section{Notation and known results}\label{sec_teoria}
 We first give a brief introduction on the  {disc polynomials} that appear in the Equations  \pref{eq-pd-esfq} and \pref{eq-pd-prod-esf-compl}:
 for $2\leq q\in\N$, the function $R_{m,n}^{q-2}$, defined in the disc $\D=\pg{\xi\in\C:|\xi|\leq 1}$, is called {\em disc polynomial} (or {\em generalized Zernike polynomial})  of degree $m$ in $\xi$ and $n$ in $\overline\xi$ associated to ${q-2}$, and can be written as (see \cite{koor-II})
\begin{equation}\label{eq-def-pol-disc}
R_{m,n}^{{q-2}}(\xi)=r^{|m-n|}\,e^{i(m-n)\phi}\,R_{\min\{m,n\}}^{({q-2},\,|m-n|)}(2r^2 -1), \quad \xi=re^{i\phi}\in\D,\ m,n\in\Z_+,\end{equation}
where $R_{k}^{(\alpha,\beta)}$ is the usual Jacobi polynomial of degree $k$ associated to the real numbers $\alpha,\beta>-1$ and normalized by $R_{k}^{(\alpha,\beta)}(1)=1$ (see \cite[page 58]{szego}).

For future use we also define 
\begin{eqnarray}\label{eq_Rm1inf}
R^{\infty}_{m,n}(\xi)=R^{-1}_{m,n}(\xi)&:=&{\xi\vphantom{\overline\xi}}^{m}\overline\xi^{n},\qquad \xi\in\D\,.
\end{eqnarray}

It is well known (see \cite{koor-II,koor-london}) 
 that the disc polynomials, as well as  those defined  in \pref{eq_Rm1inf},  satisfy, for $q\in\N\cup\{\infty\}$, $\xi\in\D$, and  $m,n\in\Z_+$,
\begin{eqnarray} \label{eq_modulo-Rmn}
&R_{m,n}^{q-2}(1) = 1, \quad 
|R^{q-2}_{m,n}(\xi)| \leq 1, &\quad 
\\\label{eq_propridd_Rmn}
&R_{m,n}^{q-2} (e^{i\phi}\xi)  = e^{i(m-n)\phi}R_{m,n}^{q-2} (\xi),&  \quad \phi\in\R, 
\\\label{eq_proprconj_Rmn}
&R^{q-2}_{m,n}\pt{\,\overline\xi\,}=\overline{ R^{q-2}_{m,n}(\xi)}.&
\end{eqnarray}

Observe that, by \pref{eq_modulo-Rmn}, the series in \pref{eq-pd-esfq} and \pref{eq-pd-prod-esf-compl} converge uniformly in their domain.  Moreover, the characterization in  \pref{eq-pd-prod-esf-compl} implies that 
the functions $(\xi,\eta)\mapsto R_{m,n}^{q-2}(\xi)R_{k,l}^{p-2}(\eta)$ are PD on $\prode$ for all $m,n,k,l\in\Z_+$ (and, by \pref{eq-pd-esfq}, the functions   $\xi\mapsto R_{m,n}^{q-2}(\xi)$ are PD on $\esfq$).

	Another important property is contained in the following lemma.
	\begin{lemma}\label{lm_Rto0}
		If $q\in\N\cup\pg{\infty}$ and $\xi\in\D'=\{\xi\in\C:|\xi|< 1\}$, then
		\begin{equation}\label{eq_Rto0}
		\lim_{\stackrel{m+n\to\infty}{m\neq n}}R_{m,n}^{q-2}(\xi)=0\,.
		\end{equation}
		
		If $q\in\N\cup\pg{\infty}$ and $\xi=e^{i\phi}\in\partial\D$ then
		\begin{equation}\label{eq_Reitet}
		R_{m,n}^{q-2}(e^{i\phi})=e^{i(m-n)\phi}\,.
		\end{equation}
			\end{lemma}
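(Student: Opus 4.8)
The plan is to prove the two displayed identities separately, dealing first with the easy boundary formula \eqref{eq_Reitet} and then reducing the interior decay \eqref{eq_Rto0} to an estimate on normalized Jacobi polynomials. For \eqref{eq_Reitet} there is no computation to do: writing $e^{i\phi}=e^{i\phi}\cdot 1$ and applying the rotation property \eqref{eq_propridd_Rmn} together with the normalization $R_{m,n}^{q-2}(1)=1$ from \eqref{eq_modulo-Rmn} gives $R_{m,n}^{q-2}(e^{i\phi})=e^{i(m-n)\phi}R_{m,n}^{q-2}(1)=e^{i(m-n)\phi}$, valid for every $q\in\N\cup\{\infty\}$ simultaneously since \eqref{eq_propridd_Rmn} is stated for all such $q$.

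For \eqref{eq_Rto0} I would first dispose of $q=\infty$: by the definition \eqref{eq_Rm1inf}, $\m{R_{m,n}^{\infty}(\xi)}=\m{\xi}^{m+n}=r^{m+n}\to 0$ as $m+n\to\infty$ whenever $r=\m{\xi}<1$. For finite $q$, note from \eqref{eq_propridd_Rmn} that $\m{R_{m,n}^{q-2}(\xi)}$ depends only on $r=\m\xi$, so I may take $\xi=r\in[0,1)$; the case $r=0$ is immediate because the factor $r^{\m{m-n}}$ in \eqref{eq-def-pol-disc} vanishes when $m\neq n$. For $0<r<1$ the factorization \eqref{eq-def-pol-disc} reduces the claim, on writing $\alpha=q-2\ge 0$, $N=\min\{m,n\}$ and $D=\m{m-n}\ge 1$ (so that $m+n=2N+D$), to showing
\[
r^{D}\,\bigl|R_N^{(\alpha,D)}(2r^2-1)\bigr|\longrightarrow 0\qquad\text{as } 2N+D\to\infty,
\]
where $x_0:=2r^2-1\in(-1,1)$ is fixed.

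I would argue by contradiction: if some subsequence $(N_k,D_k)$ kept this quantity bounded below, then after passing to a further subsequence each of $N_k,D_k$ is either constant or tends to $\infty$, and $2N_k+D_k\to\infty$ rules out both being bounded, leaving three regimes. If $N_k\equiv N$ is fixed and $D_k\to\infty$, then $R_N^{(\alpha,D)}(x_0)$ is a fixed finite combination of powers of $(1-r^2)$ whose coefficients are polynomial in $D$ of degree at most $N$ (visible from the terminating ${}_2F_1$ representation ${}_2F_1(-N,N+\alpha+D+1;\alpha+1;1-r^2)$ of the normalized Jacobi polynomial), so $\bigl|R_N^{(\alpha,D)}(x_0)\bigr|\le C_N(1+D)^N$ and $r^{D}(1+D)^N\to 0$. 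If instead $N_k\to\infty$ while $D_k$ stays in a finite set, the classical interior asymptotics for Jacobi polynomials give $R_N^{(\alpha,D)}(x_0)=O(N^{-\alpha-1/2})\to 0$ uniformly over that finite set, and the factor $r^{D}\le 1$ is harmless.

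The remaining regime, $N_k\to\infty$ and $D_k\to\infty$ at once, is the main obstacle: both the degree and the parameter $\beta=D$ blow up, the explicit series for $R_N^{(\alpha,D)}(x_0)$ is heavily alternating, and the crude endpoint bound $\bigl|R_N^{(\alpha,D)}(x_0)\bigr|\le\binom{N+D}{N}/\binom{N+\alpha}{N}$ grows far too fast to be overcome by $r^{D}$. I would resolve this with an estimate for normalized Jacobi polynomials at a fixed interior point that is uniform in the parameter $\beta$. Concretely, I would extract $P_N^{(\alpha,\beta)}(x_0)$ from its generating function by a Cauchy integral and estimate it by steepest descent, where the parameter dependence enters only through an explicit factor $\bigl(1+t+\sqrt{1-2x_0t+t^2}\,\bigr)^{-\beta}$: choosing the contour so that this factor multiplied by $r^{\beta}$ stays bounded controls the $D\to\infty$ direction, while the usual algebraic decay at an interior point controls the $N\to\infty$ direction (alternatively, one may invoke known uniform asymptotics for Jacobi polynomials with varying parameters). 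Since all three regimes contradict the assumption, \eqref{eq_Rto0} follows.
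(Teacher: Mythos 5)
Your derivation of \pref{eq_Reitet} is exactly the paper's: it is the combination of the rotation property \pref{eq_propridd_Rmn} with the normalization in \pref{eq_modulo-Rmn}. For \pref{eq_Rto0} the paper itself gives no argument for $2\le q<\infty$ --- it simply cites \cite{meneg-jean-traira} --- so your attempt at a self-contained proof via Jacobi asymptotics is a genuinely different route. Within it, the $q=\infty$ case is correct, the reduction to $\xi=r\in[0,1)$ via \pref{eq_propridd_Rmn} is correct, and the first two regimes of your trichotomy are sound: for fixed $N$ and $D\to\infty$ the coefficients of the terminating ${}_2F_1$ are indeed polynomial in $D$ of degree at most $N$, so $r^D(1+D)^N\to0$; for $N\to\infty$ and $D$ constant the interior Darboux estimate $O(N^{-\alpha-1/2})$ with $\alpha=q-2\ge0$ does the job. (A cosmetic omission: your ``finite $q$'' branch assumes $\alpha=q-2\ge0$ and so skips $q=1$, but by \pref{eq_Rm1inf} that case is literally the same computation as $q=\infty$.)

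The genuine gap is the third regime, $N\to\infty$ and $D\to\infty$ simultaneously, which you correctly single out as the main obstacle and then only sketch. What is needed there is a bound forcing $r^{D}\m{R_N^{(\alpha,D)}(2r^2-1)}\to0$ \emph{uniformly over all ratios} $D/N$, and neither the generating-function/steepest-descent plan nor the appeal to ``known uniform asymptotics for Jacobi polynomials with varying parameters'' is actually executed. The difficulty is real, not bookkeeping: the saddle point of the relevant phase moves with $D/N$, the normalization $P_N^{(\alpha,D)}(1)=\binom{N+\alpha}{N}\sim cN^{\alpha}$ must be divided out, and one must verify that a single admissible contour keeps the $\beta$-dependent factor times $r^{D}$ bounded while still producing decay in $N$ across the entire range of $D/N$ --- this is precisely the delicate part of uniform Jacobi asymptotics with a large varying parameter, and it is asserted rather than proved. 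As it stands the argument is incomplete exactly where the content lies. A complete and much more elementary alternative is to use Koornwinder's Laplace-type integral representation of $R^{\alpha}_{m,n}$ from \cite{koor-II}, which expresses the disc polynomial as an average of products of $m$-th and $n$-th powers of quantities whose product of moduli is strictly less than $1$ almost everywhere when $\m{\xi}<1$; dominated convergence then disposes of all three of your regimes at once, and this is in the spirit of the proof in the reference \cite{meneg-jean-traira} that the paper invokes.
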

	For the proof of \pref{eq_Rto0} when $q\geq2$ see \cite{meneg-jean-traira}.
	It is worth noting that the limit is true even without the condition ${m\neq n}$, except for the special case $q=2$ and $\xi=0$.
	On the other hand, \pref{eq_Reitet} follows from  (\ref{eq_modulo-Rmn}-\ref{eq_propridd_Rmn}).\\

\subsection{Positive Definiteness on complex spheres}\label{sec_PD_on_sing}
As we anticipated in the introduction, it is known by \cite{P-valdir-pd-esfcompl} that a continuous function $f:\D\to\C$ is PD on $\esfq$, $2\leq q\in\N$, if, and only if, the coefficients $a_{m,n}$ in  the series representation \pref{eq-pd-esfq} satisfy $\sum a_{m,n}<\infty$ and $a_{m,n}\geq0$ for all $m,n\in\Z_+$. 

 In the case of  the complex sphere $\Omega_2$, when associating a continuous function $f$ to a kernel via the formula $K(z,z'):=f(z\cdot z')$, one has that  $z\cdot z'\in\partial\D$ for every $z,z'\in\Omega_2$, then  it becomes  natural to consider functions  $f$ defined in $\partial\D$.
The PD functions on $\Omega_2$ were also characterized in 
\cite{P-valdir-pd-esfcompl}, namely, $f:\partial\D\to \C$ is PD on $\Omega_2$ if, and only if,
\begin{equation}\label{eq-pd-prod1}\begin{array}{ccc}
&\displaystyle f(\xi)=\sum_{m\in\Z} a_{m}\xi^m,\quad \xi\in{\partial \D},&\\
&\mbox{where $\sum a_{m}<\infty$ and $a_{m}\geq0$ for all $m\in\Z$.}&\end{array}
\end{equation}
In order to write this formula as \pref{eq-pd-esfq}, and then to be able to use the same expansion for all $q\in\N$, we use the polynomials $R_{m,n}^{-1}$ defined in \pref{eq_Rm1inf} and we rearrange the coefficients in \pref{eq-pd-prod1} so that 
\begin{equation}\label{eq-pd-prod1mn}
f(\xi)
=\sum_{m,n\in\Z_+} a_{m,n}R_{m,n}^{-1}(\xi),\quad \xi\in\partial\D,
\end{equation}
with the additional requirement that  $a_{m,n}=0 $ if $mn>0$, implying that 
$$\begin{cases}
a_{m,0}:=a_m,&m\geq 0,\\ a_{0,m}:=a_{-m},&m\geq 0.
\end{cases}$$
In this way, $f$ is PD on $\Omega_2$ if, and only if, it satisfies the characterization \pref{eq-pd-esfq} with $a_{m,n}=0 $ for $mn>0$ and $\partial\D$ in the place of $\D$.

The complex sphere  $\esfi$ is defined as the sphere of the  sequences in the  Hilbert complex space $\ell^2(\C)$  having unitary norm.
In  \cite{chris-ressel-pd}, it was proved that  a continuous function $f:{\D}\to \C$ is PD on $\esfi$ if, and only if, it admits the series representation 
\begin{equation}\label{eq-pd-esfi}\begin{array}{ccc}
&\displaystyle f(\xi)=\sum_{m,n\in\Z_+} a_{m,n}{\xi\vphantom{\overline\xi}}^m\overline{\xi}^n,\quad \xi\in{\D},&\\
&\mbox{where $\sum a_{m,n}<\infty$ and $a_{m,n}\geq0$ for all $m,n\in\Z_+$,}&\end{array}
\end{equation}
 which becomes analogous to the characterization \pref{eq-pd-esfq} if we use the definition of $R_{m,n}^\infty$ in  \pref{eq_Rm1inf}.
It is also worth noting  that $f$  is PD on $\Omega_\infty$ if, and only if, $f$  is PD on $\Omega_{2q}$ for every $q\geq2$.

\subsection{Positive Definiteness on products of spheres}
\label{sec_PD_on_prod}
 From now on, in order to simplify the exposition,  we will use the symbol $\DD$ to designate either  $\partial \D$ or $\D$, depending if we are  considering, respectively,  the sphere $\Omega_2$ or a higher dimensional sphere.

 When considering products of spheres $\prode$, $ q,p\in\N\cup\pg\infty$, a continuous functions $f:\DD\times\DD \to \C$ is said to be PD (resp. SPD) on $\prode$, if the associated kernel  
 \begin{equation}\label{eq-Kfromfprod}
 K:[\prode]\times[\prode]\ni (\,(z,w),(z',w')\,)\mapsto f(z\cdot z',w\cdot w')
 \end{equation}
 is PD (resp. SPD)  on $\prode$.

In this section we will justify the following claim:
\begin{lemma}\label{lm_charDD} 
 A continuous function $f:\DD \times \DD \to \C$  is PD on $\prode$, $ q,p\in\N\cup\pg\infty$,   if and only if, it admits an expansion in the form 
\begin{equation}\label{eq-pd-prod-esf-compl_DD} 
\begin{array}{ccc}
&\displaystyle f(\xi,\eta)=\sum_{m,n,k,l\in\Z_+} a_{m,n,k,l}R_{m,n}^{q-2}(\xi)R_{k,l}^{p-2}(\eta),\quad (\xi,\eta)\in{\DD}\times{\DD},&\\
&\mbox{where $\sum a_{m,n,k,l}<\infty$ and $a_{m,n,k,l}\geq0$ for all $m,n,k,l\in\Z_+$,}&\end{array}
\end{equation}
  adding the requirement that $a_{m,n,k,l}=0$ if $q=1$ and $mn>0$ (resp. $p=1$ and $kl>0$).
\end{lemma}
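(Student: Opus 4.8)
The plan is to prove Lemma \ref{lm_charDD} by reducing all the cases to the single reference case $q,p\geq2$ finite (the characterization \pref{eq-pd-prod-esf-compl} from \cite{P-berg-porcu_gelfand}), treating the extensions to $q=\infty$, $p=\infty$, and the boundary cases $q=1$ and/or $p=1$ through the identifications already set up in Section \ref{sec_PD_on_sing}. The key observation is that PD on a product is built from PD on each factor: the tensor functions $R_{m,n}^{q-2}(\xi)R_{k,l}^{p-2}(\eta)$ are products of functions that are PD on the respective single spheres (as recalled after \pref{eq-pd-prod-esf-compl}), and products of PD kernels are PD (Schur product theorem). This gives the ``if'' direction essentially for free, uniformly in $q,p\in\N\cup\pg\infty$: any $f$ of the form \pref{eq-pd-prod-esf-compl_DD} with nonnegative summable coefficients is a convergent (by \pref{eq_modulo-Rmn}, uniformly) nonnegative combination of PD kernels, hence PD on $\prode$.

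For the ``only if'' direction I would proceed factor by factor, using the single-sphere characterizations. First I would handle the $\esfi$ factor: by the remark after \pref{eq-pd-esfi}, $f$ is PD on $\Omega_\infty$ in a given variable if and only if it is PD on $\Omega_{2q}$ for every finite $q\geq2$, and since $R^{q-2}_{m,n}\to R^\infty_{m,n}$ as $q\to\infty$, the expansion \pref{eq-pd-prod-esf-compl} passes to the limit, yielding \pref{eq-pd-prod-esf-compl_DD} with $R^\infty$ and the same nonnegative coefficients. Doing this in one or both variables covers all cases with values in $\pg{2,3,\dots,\infty}$. Next I would handle the $\Omega_2$ factors: as explained around \pref{eq-pd-prod1}--\pref{eq-pd-prod1mn}, a PD function on $\Omega_2$ is expanded over $\xi^m=R_{m,0}^{-1}(\xi)$ or $\overline\xi^{\,m}=R_{0,m}^{-1}(\xi)$ on $\partial\D$, which is exactly \pref{eq-pd-prod-esf-compl_DD} restricted to indices with $mn=0$, i.e. the requirement $a_{m,n,k,l}=0$ when $mn>0$. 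Thus the boundary cases are obtained by imposing this vanishing condition on the corresponding index pair, consistently with the statement of the lemma.

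The remaining point is to make sure the coefficient extraction is legitimate in the mixed and infinite cases, which is where the main obstacle lies. For finite $q,p\geq2$ the coefficients $a_{m,n,k,l}$ are recovered by integrating $f$ against the (tensor products of) disc polynomials with respect to the appropriate orthogonality measure on $\D\times\D$, using orthogonality of the $R^{q-2}_{m,n}$; the properties \pref{eq_modulo-Rmn}--\pref{eq_proprconj_Rmn} and Lemma \ref{lm_Rto0} guarantee uniform convergence so the integrals commute with the sum. The delicate step is the passage $q\to\infty$ (and the restriction to $\partial\D$ in the $\Omega_2$ case), where the orthogonality structure degenerates: on $\partial\D$ the radial Jacobi factor disappears and one must instead use the Fourier coefficients in the angular variable $e^{i(m-n)\phi}$, and one must argue that the coefficients obtained at finite $q$ stabilize to a well-defined nonnegative limit as $q\to\infty$. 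I expect the cleanest route is to invoke the already-published single-sphere results \pref{eq-pd-esfq}, \pref{eq-pd-prod1mn}, \pref{eq-pd-esfi} in each variable and combine them via a Fubini/iterated-PD argument, rather than redoing the harmonic analysis from scratch; the lemma is really a bookkeeping synthesis of known facts, so the ``justification'' promised in the text should consist mainly in carefully tracking how the vanishing conditions and the $R^\infty$ identifications propagate through the product structure.
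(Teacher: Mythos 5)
Your ``if'' direction is fine and matches the paper's (nonnegative summable combinations of the PD tensor kernels $R^{q-2}_{m,n}(\xi)R^{p-2}_{k,l}(\eta)$ are PD, via the Schur product theorem and uniform convergence from \pref{eq_modulo-Rmn}), and you have correctly located where the difficulty sits. But the ``only if'' direction as you present it has two genuine gaps. First, for the mixed cases ($q=1$ or $q=\infty$ with $2\le p<\infty$, and vice-versa) you propose to combine the single-sphere expansions ``factor by factor'' via ``a Fubini/iterated-PD argument''; this is exactly the nontrivial step, because knowing that $f(\cdot,\eta)$ and $f(\xi,\cdot)$ each admit expansions with nonnegative coefficients does not by itself produce a joint expansion with nonnegative coefficients $a_{m,n,k,l}$ --- one must show that the coefficient functions obtained by expanding in one variable are themselves continuous PD functions of the other variable. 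The paper does not redo this: it invokes product-characterization theorems already available in the literature, namely Corollary 3.5 of \cite{P-berg-porcu_gelfand} (for $\Omega_2\times L$, $L$ a locally compact group, applied with $L=U(p)$) and Theorem 1.3 of \cite{P-berg-porcu-Omega-inf} (for $\esfi\times L$), and then transfers the statement from $U(p)$ to $\esfp$ via Remark \tref{rem_U_Om}.

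Second, and more seriously, for $q=p=\infty$ your argument --- ``$R^{q-2}_{m,n}\to R^\infty_{m,n}$ as $q\to\infty$, so the expansion passes to the limit with the same nonnegative coefficients'' --- is incorrect as stated: the coefficients in \pref{eq-pd-prod-esf-compl} depend on $q$ and $p$, so there is no single family of coefficients to ``pass to the limit''; proving that limiting coefficients exist, are nonnegative and summable is precisely the content of Theorem \tref{th_PDinfty}, to which the paper devotes all of Section \tref{sec_infty}. There the argument runs through Lemma \tref{thm:technical} (the sections $\xi\mapsto\sum_{j,k}f(\xi,w_j\cdot w_k)c_j\overline{c_k}$ are PD on $\esfq$), the Christensen--Ressel theorem \cite{chris-ressel-pd} on $\esfi$, an expansion of $f$ on $\esfi\times U(p)$, and an identification of the coefficient functions $\varphi_{m,n}$ through derivatives at $0$ showing they are PD on $\Omega_{2p}$ for every $p\geq2$, hence on $\esfi$. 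You flagged the stabilization issue yourself but did not resolve it, so the proposal does not yet constitute a proof of the lemma in the cases where $q$ and/or $p$ lie in $\pg{1,\infty}$.
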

Lemma \ref{lm_charDD} is a generalization of the characterization  \pref{eq-pd-prod-esf-compl}  to include  the cases when $q,p$ can take the values $1$ or $\infty$, replacing $\D$ with $\DD$  and redefining the coefficients in the series, where $p$ or $q$ is $1$, as we did in Equation \pref{eq-pd-prod1mn}.

In order to justify the claim, we will use results from \cite{P-berg-porcu_gelfand} and \cite{P-berg-porcu-Omega-inf}, which are stated in a more general setting. 
Let   $U(p)$ be the locally compact group of the unitary $p\times p$ complex matrices. A  continuous
function
$\widetilde \Phi:U(p)\to \C$
is called Positive Definite on $U(p)$
 if the kernel
$(A,B)\mapsto\widetilde\Phi(B^{-1}A)$ is 
  Positive Definite on $U(p)$ {(see \cite[page 87]{Berg})}.

The following remark will be useful to translate from this setting to the case of complex spheres in which we are interested 
(see also \cite[Section 6]{P-berg-porcu_gelfand}).
\begin{remark}\label{rem_U_Om}
Let $\Phi:\DD\to\C$ and $\widetilde \Phi:U(p)\to\C$ be related by $\widetilde\Phi(A)=\Phi(Ae_p\cdot e_p)$, where $e_p= (1,0,\ldots,0)\in \esfp$.

Then $\widetilde\Phi(A)$ depends only on the upper-left  element $[A]_{1,1}$ and it can be seen by the definition of Positive Definiteness that  $\widetilde\Phi$ is PD on $U(p)$ if, and only if, $\Phi$ is PD on $\esfp$.

 Moreover, $\widetilde\Phi$ is continuous if, and only if, $\Phi$ is, since $M:U(p)\to \DD:A\mapsto [A]_{1,1}$ is continuous and admits a continuous right 
inverse  $$M^-:\DD\to U(p):\xi\mapsto M^-(\xi)\ \text{ such that }\ [M^-(\xi)]_{1,1}=\xi\,.$$  \end{remark}

Now Lemma \ref{lm_charDD} is obtained as follows:
\begin{enumerate}
\item When $ q,p\in\N,\;q,p\geq2$,   the lemma is exactly the characterization \pref{eq-pd-prod-esf-compl}.
\item When $q=1$ and $p\in\N$ (or vice-versa) we can use  Corollary 3.5 in \cite{P-berg-porcu_gelfand}, observing that we can  identify functions on $\Omega_2$ with periodic functions on $\R$, and we can take the locally compact group $L=U(p)$,  obtaining a characterization for PD functions on $\Omega_2\times U(p)$. Then we can translate the characterization from $U(p)$ to $\esfp$, using Remark \ref{rem_U_Om}.

\item When $q=\infty$ and $p\in\N$  (or vice-versa) we can use  Theorem 1.3 in \cite{P-berg-porcu-Omega-inf}, taking the locally compact group $L=U(p)$ and proceeding as above.

\item When $q=p=\infty$  the claim is a consequence of  Theorem \ref{th_PDinfty} in Section \ref{sec_infty}.
\end{enumerate}

\section{Proof of the main results}\label{sec_proofmain}
In the following we  
will need to consider matrices  whose elements are described by many indexes: for this we will write  
$$
\pq{b_{i,j,k,l,...}}_{i=1,..,I,\; j=i,..,J,\; ...}^{k=1,..,K,\;l=1,..,L,\; ...}\,,
$$ 
where the indexes in the lower line are intended to be line indexes and those in the above line are column indexes. 
Also, we will  specify the indexes alone when their ranges are clear.

Let $q,p\in \N\cup\pg{\infty}$. From \pref{eq-quad-form-geral} and \pref{eq-Kfromfprod}, the definition of Positive Definiteness on $\esfq\times\esfp$, for a 
 continuous function  $f:\DD\times\DD\to \C$, takes the form 
\begin{equation}\label{eq-quad-form}
\sum_{\mu,\nu=1}^Lc_\mu\overline{c_\nu}f(z_\mu\cdot z_\nu, w_\mu\cdot w_\nu) \geq0
\end{equation}
for all $L\geq1$, $(c_1,c_2,\ldots,c_L)\in\C^L$ and $X=\{(z_1,w_1),(z_2,w_2),\ldots,(z_L,w_L)\}\subset\esfq\times\esfp$.
As a consequence, if we define the matrix  $A_X$ associated to the function $f$  and to the set $X$ by
\begin{equation}\label{eq-def-AX}
A_X:= [f(z_\mu\cdot z_\nu, w_\mu\cdot w_\nu)]^{\mu=1,\ldots, L}_{\nu=1,\ldots, L}\,,
\end{equation}
then: 
\begin{itemize}
\item 
$f$ is PD if, and only if, for every choice of $L$,  $X$, and $c^t = (c_1,c_2,\ldots,c_L)$,
$$
\overline c^t A_X c \geq 0,
$$
that is, $A_X$ is a Hermitian and positive semidefinite matrix (see \cite[page 430]{horn-joh-matrix});
\item $f$ is also SPD if, and only if, for every choice of $L$ and $X$,
$$
\overline c^t A_X c = 0 \Longleftrightarrow c=0,
$$
that is, $A_X$ is a positive definite matrix.
\end{itemize}

 Let now $f$ be a continuous function, PD  on $\esfq\times\esfp$, which we can write uniquely as in Lemma \ref{lm_charDD}.
If we define the set 
\begin{equation}\label{eq_defJ}
J=\pg{(m,n,k,l)\in\Z_+^4:\ a_{m,n,k,l}>0}\,,
\end{equation}
then, for a finite set $X=\{(z_1,w_1),(z_2,w_2),\ldots,(z_L,w_L)\}\subseteq\prode$, we can write 
\begin{equation}\label{eq_AxsumBx}
A_X=\sum_{(m,n,k,l)\in J} a_{m,n,k,l} B_X^{m,n,k,l}
\end{equation} where 
\begin{equation}\label{eq_defBX}
B_X^{m,n,k,l}:= [R^{q-2}_{m,n}(z_\mu\cdot z_\nu)\,R^{p-2}_{k,l}( w_\mu\cdot w_\nu)]_{\nu=1,\ldots, L}^{\mu=1,\ldots, L}
\end{equation}
is the positive semidefinite matrix  associated to $X$ and to the  function $R_{m,n}^{q-2}(\xi)R_{k,l}^{p-2}(\eta)$.

With these definitions, the following lemma holds.
\begin{lemma}\label{lm_Ax_sistBx}
The matrix $A_X$  is a positive definite matrix if, and only if, the  equivalence
\begin{equation}\label{eq_sist_iff_B}
	\overline c^t B_X^{m,n,k,l} c = 0\ \ \forall\ (m,n,k,l)\in J\quad  \Longleftrightarrow \quad c=0
\end{equation}
holds true.
\end{lemma}
Lemma \ref{lm_Ax_sistBx} is a consequence of the following one.
\begin{lemma}
 Let $A= \sum_jA_j$, where $A_j$ are positive semidefinite matrices. Then A is positive semidefinite and the condition that $A$ is positive definite is equivalent to 
 $$\overline c^tA_jc= 0 \ \ \forall j \quad  \Longleftrightarrow \quad c=0\,. $$
\end{lemma}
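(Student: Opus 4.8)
The plan is to reduce the statement to the elementary fact that a finite (or convergent) sum of nonnegative real numbers vanishes if and only if every summand vanishes.

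First I would note that, each $A_j$ being Hermitian, the sum $A=\sum_j A_j$ is Hermitian as well, and that for an arbitrary vector $c$ the associated quadratic form decomposes linearly:
$$
\overline c^t A c=\sum_j \overline c^t A_j c.
$$
Since each $A_j$ is positive semidefinite, every term $\overline c^t A_j c$ is a nonnegative real number; hence their sum $\overline c^t A c$ is nonnegative, which proves that $A$ is positive semidefinite.

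The crucial step is then the observation that, being a sum of nonnegative reals,
$$
\overline c^t A c=0\quad\Longleftrightarrow\quad \overline c^t A_j c=0\ \ \forall j.
$$
This is precisely where the positive semidefiniteness of each $A_j$ is indispensable: without the sign condition the individual terms could cancel, and the equivalence would fail. This is the only point that needs to be stated with care; there is no genuine obstacle beyond it.

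Finally, recalling that $A$ is positive definite exactly when $\overline c^t A c=0$ implies $c=0$, I would substitute the displayed equivalence into this definition to conclude that $A$ is positive definite if and only if the vanishing of $\overline c^t A_j c$ for all $j$ forces $c=0$, which is the asserted condition. Applying this with $A=A_X$ and the family $\{a_{m,n,k,l}B_X^{m,n,k,l}:(m,n,k,l)\in J\}$ (the positive scalars $a_{m,n,k,l}$ not affecting whether a quadratic form vanishes) yields Lemma \ref{lm_Ax_sistBx}. If the index set is infinite one only needs the series $\sum_j A_j$ to converge entrywise, which holds in the application by $\sum a_{m,n,k,l}<\infty$ together with $|R_{m,n}^{q-2}|,|R_{k,l}^{p-2}|\le 1$.
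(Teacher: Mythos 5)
Your proof is correct and follows essentially the same route as the paper's: decompose the quadratic form as $\overline c^t A c=\sum_j \overline c^t A_j c$, use that a sum of nonnegative reals vanishes iff each term does, and read off both the positive semidefiniteness of $A$ and the stated equivalence. The extra remarks on the application to $A_X$ and on convergence in the infinite case are sound but not needed for the lemma itself.
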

\begin{proof}
First,  $\overline c^tAc=\sum_j\overline c^tA_jc\geq 0$, then one has that $A$ is  positive semidefinite too.
%
%
\\If A is positive definite and  $\overline c^tA_jc=0$ for every $j$, then of course  $\overline c^tAc=0$ and so $c=0$.
\\Finally, if  $\overline c^tAc=0$ then (sum of nonnegative terms) $\overline c^tA_jc=0$ $\forall j$; if we assume that this system implies $c=0$ then $A$ is positive definite.
\end{proof}

In the following proposition we prove one of the two implications of Theorem \ref{th_main}.
\begin{prop}\label{th_spd->progr}
Let $q,p\in\N\cup\pg{\infty}$, $f$ be a  continuous function  which is  PD   on $\prode$ and consider
\begin{equation}\label{eq_defJ'}
J'=\pg{(m-n,k-l)\in\Z^2:\ (m,n,k,l)\in J}\,.
\end{equation}
 If $f$  is  SPD  on $\prode$ then 
\begin{equation}\label{eq_inters}
J'\cap (N\Z+x)\times (M\Z+y)\neq \emptyset \ \text{ for every $N,M,x,y\in\N\,.$}
\end{equation}
\end{prop}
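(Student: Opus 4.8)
The plan is to prove the contrapositive: I will assume that the intersection condition \pref{eq_inters} fails for some choice of $N,M,x,y\in\N$ and construct a nonzero vector $c$ and a finite set $X\subset\prode$ witnessing that $A_X$ is not positive definite. By Lemma \ref{lm_Ax_sistBx}, it suffices to produce a nonzero $c$ for which $\overline c^t B_X^{m,n,k,l}c=0$ simultaneously for all $(m,n,k,l)\in J$.

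**Key idea.** The failure of \pref{eq_inters} says that for every $(m,n,k,l)\in J$, the pair $(m-n,k-l)$ avoids the product of arithmetic progressions $(N\Z+x)\times(M\Z+y)$; equivalently, either $m-n\not\equiv x \pmod N$ or $k-l\not\equiv y\pmod M$. The natural way to exploit this is to place the points on the distinguished circles. Using \pref{eq_Reitet}, if I choose points of the form $z_\mu$ with $z_\mu\cdot z_\nu=e^{i\phi_{\mu\nu}}\in\partial\D$, then $R^{q-2}_{m,n}(z_\mu\cdot z_\nu)=e^{i(m-n)\phi_{\mu\nu}}$ depends on $(m,n)$ only through $m-n$, and similarly for the $p$-factor. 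In particular, taking the $z$-coordinates to lie on a common great circle and likewise the $w$-coordinates, the entries of $B_X^{m,n,k,l}$ become pure exponentials in $m-n$ and $k-l$. This converts the problem into one about the integer pairs $(m-n,k-l)\in J'$, which is exactly the combinatorial data in \pref{eq_inters}.

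**Construction.** I would take $X$ to consist of $NM$ points indexed by $(a,b)$ with $0\le a<N$, $0\le b<M$, choosing the angles so that the $z$-part contributes a phase $e^{2\pi i a (m-n)/N}$ and the $w$-part a phase $e^{2\pi i b(k-l)/M}$ (concretely, placing points at roots of unity on the respective circles). Then $B_X^{m,n,k,l}$ factors as a tensor product of rank-one matrices built from the characters $a\mapsto e^{2\pi i a(m-n)/N}$ and $b\mapsto e^{2\pi i b(k-l)/M}$. The vector $c$ is chosen as the tensor product of discrete-Fourier vectors selecting the residues $x\bmod N$ and $y\bmod M$, i.e.\ $c_{a,b}=e^{-2\pi i(a x/N+b y/M)}$. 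The quadratic form $\overline c^t B_X^{m,n,k,l}c$ then factors into a product of two character-orthogonality sums, each of which vanishes precisely when $m-n\not\equiv x\pmod N$ or $k-l\not\equiv y\pmod M$. Since \pref{eq_inters} fails, at least one of these congruences fails for every $(m,n,k,l)\in J$, so the whole quadratic form vanishes for every index in $J$ while $c\neq0$, contradicting positive definiteness of $A_X$ via Lemma \ref{lm_Ax_sistBx}.

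**Main obstacle.** The delicate point is ensuring that such a configuration of $NM$ \emph{distinct} points actually exists inside $\prode$ with the prescribed inner-product phases on $\partial\D$ for both coordinates simultaneously, especially in the borderline dimensions $q$ or $p$ equal to $1$ or $\infty$ where $\DD=\partial\D$ or the Hilbert-space setting changes the geometry. I would handle this by exhibiting explicit points: for $z$-coordinates, take $z_a=(\,\cos\theta_a,\ \sin\theta_a,0,\dots)$-type vectors, or more simply points on $\esfq$ whose pairwise complex inner products are controlled unit-modulus numbers $e^{2\pi i(a-a')/N}$; the analogous choice for $w$. Verifying distinctness and that the inner products land on $\partial\D$ (so that \pref{eq_Reitet} applies) is the technical heart; once the points are fixed the factorization and the character-sum computation are routine.
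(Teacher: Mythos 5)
Your proposal is correct and follows essentially the same route as the paper: the paper also takes the $NM$ points $(e^{i2\pi\tau/N}z,\,e^{i2\pi\sigma/M}w)$ obtained by rotating a single fixed point $(z,w)$ by roots of unity, pairs them with the discrete-Fourier vector $c=[e^{-i2\pi(\tau x/N+\sigma y/M)}]$, and factors each quadratic form $\overline c^t B_X^{m,n,k,l}c$ into two character sums, one of which vanishes for every $(m,n,k,l)\in J$. The ``main obstacle'' you flag is resolved precisely by this scalar-rotation construction: by \pref{eq_propridd_Rmn} together with $R^{q-2}_{m,n}(1)=1$, the pairwise inner products automatically land on $\partial\D$ with the prescribed phases, the points are distinct, and the argument works uniformly for $q,p\in\N\cup\pg{\infty}$.
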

\begin{proof}
Assume $J'\cap (N\Z+x)\times (M\Z+y)= \emptyset$ for some $N,M,x,y\in\N$. Without loss of generality we may assume $M,N\geq2$.
\\
Fix a point  $(z,w)\in \esfq\times\esfp$ and take the set of points  $$X=\pg{(e^{i2\pi \tau/N}z,e^{i2\pi \sigma/M}w)\in\prode:\ \tau=1,..,N,\,\sigma=1,..,M}\,;$$
then, using the Equations  (\ref{eq_modulo-Rmn}-\ref{eq_propridd_Rmn}), the matrix in \pref{eq_defBX} reads as 
$$B_X^{m,n,k,l}=\pq{e^{i2\pi (m-n)(\tau-\lambda)/N}e^{i2\pi (k-l)(\sigma-\zeta) /M}}^{\tau=1,..,N,\,\,\sigma=1,..,M}_{\lambda=1,..,N,\,\,\zeta=1,..,M}. $$ 
Observe that this matrix factors as the product $B_X^{m,n,k,l}=\overline b^tb$ where b is the line vector $$b=\pq{e^{i2\pi (m-n)\tau/N}e^{i2\pi (k-l)\sigma /M}}^{\tau,\sigma}$$
(we omit the dependence on $X$ and $\pt{m,n,k,l}$ in the notation for $b$).
Then each equation of the system in \pref{eq_sist_iff_B} reads as $\overline c^tB_X^{m,n,k,l}c=\overline{ c}^t\overline b^tbc=0$ and is equivalent to  $bc=0$.

At this point we take $c=\pq{e^{-i2\pi \tau x/N}e^{-i2\pi \sigma y /M}}_{\tau,\sigma}$, so that 
\begin{equation}\label{eq_bcsum}
bc=\sum_{\tau,\sigma} e^{i2\pi (m-n-x)\tau/N}e^{i2\pi (k-l-y)\sigma /M}=\sum_{\tau} e^{i2\pi (m-n-x)\tau/N}\sum_{\sigma}e^{i2\pi (k-l-y)\sigma /M}\,.
\end{equation}
By our assumption, for every $\pt{m,n,k,l}\in J$, either  $m-n-x$ is not a multiple of $N$ or $k-l-y$ is not a multiple of  $M$.  This implies that one of the two sums in \pref{eq_bcsum} is zero and then $bc=0$. 
\\Then $c$ is a nontrivial solution of the system in \pref{eq_sist_iff_B}. We have thus proved that  $J'\cap (N\Z+x)\times (M\Z+y)= \emptyset$ implies that $f$ is not SPD.
\end{proof}

The rest of this section is dedicated to proving the following proposition, which contains the remaining implication of Theorem \ref{th_main}.
 \begin{prop}\label{th_progr->spd}
Let $q,p$, $f$ and $J'$ be as in Proposition \tref{th_spd->progr}. If condition \pref{eq_inters} holds true, then $f$ is SPD  on $\prode$.
 \end{prop}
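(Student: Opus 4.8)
The plan is to prove the contrapositive form dictated by Lemma~\ref{lm_Ax_sistBx}: fixing an arbitrary finite set $X=\pg{(z_\mu,w_\mu)}_{\mu=1}^L\subseteq\prode$, I will show that the only $c\in\C^L$ satisfying $\overline c^tB_X^{m,n,k,l}c=0$ for every $(m,n,k,l)\in J$ is $c=0$; by Lemma~\ref{lm_Ax_sistBx} this is exactly positive definiteness of $A_X$, and since $X$ is arbitrary it yields SPD on $\prode$. So I assume such a $c$ and aim at $c=0$.

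First I would organize $X$ geometrically. Since $|z\cdot z'|=1$ forces $z,z'$ to be proportional, I group the indices into classes $C_1,\dots,C_r$, declaring $\mu\sim\nu$ when $z_\mu,z_\nu$ are proportional and $w_\mu,w_\nu$ are proportional; inside a class one writes $z_\mu=e^{i\alpha_\mu}z^{(s)}$, $w_\mu=e^{i\gamma_\mu}w^{(s)}$ with the pairs $(\alpha_\mu,\gamma_\mu)$ pairwise distinct modulo $2\pi$ (the points are distinct). By \pref{eq_Reitet}, for $\mu,\nu$ in the same class the corresponding entry of $B_X^{m,n,k,l}$ is exactly the character value $e^{i(m-n)(\alpha_\mu-\alpha_\nu)}e^{i(k-l)(\gamma_\mu-\gamma_\nu)}$, whereas for $\mu,\nu$ in different classes at least one of $z_\mu\cdot z_\nu$, $w_\mu\cdot w_\nu$ lies in the open disc, so by \pref{eq_Rto0} the entry tends to $0$ once $m+n\to\infty$ and $k+l\to\infty$. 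Writing $\chi_\mu(x,y):=e^{i(x\alpha_\mu+y\gamma_\mu)}$, along any sequence in $J$ with $m+n,k+l\to\infty$ (passing to a subsequence so that each $\chi_\mu(m-n,k-l)$ converges, say to $\omega_\mu$) the matrices $B_X^{m,n,k,l}$ converge to a block-diagonal matrix with rank-one blocks, and the vanishing of each quadratic form passes to the limit as $\sum_{s}\big|\sum_{\mu\in C_s}c_\mu\,\omega_\mu\big|^2=0$. Hence $\sum_{\mu\in C_s}c_\mu\omega_\mu=0$ for every class and every attainable limit $\omega$; equivalently, each exponential sum $g_s:=\sum_{\mu\in C_s}c_\mu\chi_\mu$ is forced to die, in the limit, along the relevant difference vectors $(m-n,k-l)$.

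The combinatorial heart is then the statement that a nonzero trigonometric polynomial cannot vanish on a set meeting every product of arithmetic progressions: if $g_s\not\equiv0$ on $\Z^2$, then by the structure of the zero set of an exponential sum (the Skolem--Mahler--Lech phenomenon) $\pg{(x,y):g_s(x,y)=0}$ is contained in a finite union of cosets of subgroups of $\Z^2$, which, being proper, omits a whole product of arithmetic progressions; since $J'$ meets that product, $g_s$ cannot vanish everywhere along $J'$. Combined with the pairwise distinctness of the within-class characters $\chi_\mu$ (linear independence of characters), this should drive all $c_\mu=0$, class by class, giving $c=0$.

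The step I expect to be the main obstacle is the reconciliation between the analytic decay and the combinatorics. To annihilate \emph{all} cross-class entries I must reach difference vectors $(m-n,k-l)$ realized in $J$ with \emph{both} $m+n$ and $k+l$ large, while the hypothesis controls only the projection $J'$. The key auxiliary fact is therefore combinatorial: if $J'$ meets every product of arithmetic progressions, then for each such product $P$ and each $R$ the set $J'\cap P$ still contains a point with $|m-n|,|k-l|>R$ (a refinement-and-counting argument, splitting each progression modulo a large integer), so that a large difference forces a large degree and the cross terms do vanish. The delicate point is that the limiting procedure only shows $g_s$ becomes \emph{small} at large-degree points, not that it vanishes there, and for irrational phase differences a nonzero exponential sum can be arbitrarily small without vanishing; upgrading this approximate vanishing into the exact vanishing needed for the Skolem--Mahler--Lech argument — separating the genuinely resonant (finite-order) part of the characters, where the progression condition bites, from the connected part — is where the real work lies. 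Finally, the cases $q=2$ or $p=2$ with a vanishing inner product must be handled separately, since \pref{eq_Rto0} fails precisely there, and Theorems~\ref{th_main_1p} and~\ref{th_main_11} then follow verbatim once their expansions are rewritten in the common form of Lemma~\ref{lm_charDD}.
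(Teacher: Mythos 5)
Your decomposition into proportionality classes, the rank-one character blocks, the decay of the cross-class entries via \pref{eq_Rto0}, the Skolem--Mahler--Lech input, and the observation that $J'\cap P$ must contain points with both coordinates large (this is exactly Lemma \ref{lm_int_inf_2}) are all the right ingredients, and they match the paper's. But the step you yourself flag as ``where the real work lies'' is a genuine gap, not a technicality. Your architecture is: pass to a limit along a sequence in $J$, conclude that each within-class exponential sum $g_s$ \emph{tends to zero} along the projected sequence in $J'$, and then invoke the structure of the \emph{exact} zero set of $g_s$. These do not connect: a nonzero exponential sum on $\Z^2$ with irrational frequencies can be nowhere zero and still have infimum $0$ (already $1+e^{ix\alpha}$ on $\Z$ with $\alpha/\pi$ irrational), so approximate vanishing along your subsequence yields no contradiction with Skolem--Mahler--Lech, and restricting to a product of progressions does not by itself rescue the non-resonant part.

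The paper closes this gap by reversing the order of quantifiers so that no limit is ever taken on the quadratic form. First, via Lemmas \ref{lm_SsubEnh} and \ref{lm_SDP_Xenh}, it suffices to test enhanced sets $X$ built from an antipodal-free $Y$ and common angle sets $\Theta,\Delta$; this gives $\overline c^tB_X^{m,n,k,l}c=\sum_{\mu,\nu}\overline{bc_\nu}\,bc_\mu\,R_{m,n}^{q-2}(z_\mu\cdot z_\nu)R_{k,l}^{p-2}(w_\mu\cdot w_\nu)$ with a \emph{single} character vector $b$. Then the Laurent/Skolem--Mahler--Lech machinery (Theorem 2.4 and Lemmas 2.5--2.6 of \cite{P-jean-menS1xS1}) is used in the opposite direction from yours: it produces a full product of progressions $P=(N\Z+x)\times(M\Z+y)$ on which the exponential sum $L(\alpha,\beta)$ is \emph{exactly} nonzero at every point. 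The hypothesis \pref{eq_inters} together with Lemma \ref{lm_int_inf_2} then supplies a point of $J'\cap P$ with $|m-n|,|k-l|$ as large as desired; at that one point $bc_1\neq 0$ exactly, the outer matrix over the antipodal-free $Y$ is strictly diagonally dominant hence positive definite (Lemma \ref{lm_PDlimit}), and the quadratic form is strictly positive --- a contradiction with the system, with no approximate-vanishing step to upgrade. To complete your proof you would need to reorganize it along these lines (or prove a uniform lower bound for $|g_s|$ on a suitable progression, which is essentially the same cited result). Your closing worry about $q=2$ with vanishing inner product is already absorbed by the condition $m\neq n$, $k\neq l$ in \pref{eq_Rto0}, which the choice $|m-n|,|k-l|>0$ guarantees.
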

 
First of all, we prove the following consequence of condition \pref{eq_inters}. 
\begin{lemma}\label{lm_int_inf_2}
 	 		If $A\subset\Z^2$ satisfies \begin{equation}\label{eq_inters_lm}
 	 		I_{M,N,x,y}:= A\cap (N\Z+x)\times (M\Z+y)\neq \emptyset \ \text{ for every $N,M,x,y\in\N\,,$}
 	 		\end{equation} then, for every $N,M,x,y\in\N$, the set   $$\pg{\min\pg{|\alpha|,|\beta|}: (\alpha,\beta)\in I_{M,N,x,y}}$$ is unbounded and   $I_{M,N,x,y}$ is infinite.
\end{lemma}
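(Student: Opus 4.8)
The plan is to reduce both conclusions to a single quantitative statement: for every $N,M,x,y\in\N$ and every $B\in\N$, there exists $(\alpha,\beta)\in I_{M,N,x,y}$ with $\min\{|\alpha|,|\beta|\}>B$. This at once shows that $\{\min\{|\alpha|,|\beta|\}:(\alpha,\beta)\in I_{M,N,x,y}\}$ is unbounded, and it also forces $I_{M,N,x,y}$ to be infinite, since a finite subset of $\Z^2$ would produce only finitely many, hence bounded, values of $\min\{|\alpha|,|\beta|\}$. So the whole lemma follows once that quantitative claim is established.

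To produce such a point I would pass from the progressions $N\Z+x$ and $M\Z+y$ to finer subprogressions that stay away from the origin. Concretely, fix a large integer $T$ and seek residues $x'\equiv x\pmod N$ and $y'\equiv y\pmod M$ (which we may take in $\N$) such that the subprogressions $NT\Z+x'$ and $MT\Z+y'$ are entirely disjoint from the interval $[-B,B]$. If such $x',y'$ exist, then applying the hypothesis \pref{eq_inters_lm} with moduli $NT$, $MT$ and residues $x'$, $y'$ yields a point $(\alpha,\beta)\in A$ with $\alpha\in NT\Z+x'\subseteq N\Z+x$ and $\beta\in MT\Z+y'\subseteq M\Z+y$; hence $(\alpha,\beta)\in I_{M,N,x,y}$, while $|\alpha|>B$ and $|\beta|>B$ precisely because the two subprogressions avoid $[-B,B]$.

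The crux is the existence of a residue $x'\equiv x\pmod N$ whose class $NT\Z+x'$ misses $[-B,B]$, and this is exactly where the size of $T$ is used. The class $NT\Z+x'$ meets $[-B,B]$ if and only if $x'$ is congruent mod $NT$ to some integer $e\in[-B,B]$, and any such $e$ must itself satisfy $e\equiv x\pmod N$; there are at most $2B/N+1$ integers of this kind, so they forbid at most $2B/N+1$ of the $T$ candidate residues $x+jN$, $j=0,\dots,T-1$, which are pairwise distinct mod $NT$. Choosing $T>2B/N+1$ therefore leaves an admissible $x'$, and symmetrically $T>2B/M+1$ leaves an admissible $y'$; taking $T$ larger than both bounds settles the matter. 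This elementary counting (pigeonhole) step is the only real point of the argument, and I expect it to be the main obstacle to state cleanly; everything else is bookkeeping with cosets.
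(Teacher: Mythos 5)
Your proof is correct and follows essentially the same route as the paper's: both arguments refine the given product of progressions to a sub-progression with a much larger common difference whose classes avoid a prescribed bounded box, and then apply the hypothesis \pref{eq_inters_lm} to that sub-progression. The only difference is in how the offset is selected: the paper simply picks $\widehat x>C$ already in $N\Z+x$ and a common multiple $D$ of $N$ and $M$ so large that $\widehat x-D<-C$, which makes every element of $D\Z+\widehat x$ lie outside $[-C,C]$ automatically and avoids your pigeonhole count over the $T$ candidate residues.
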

 	\begin{proof}
 		Suppose  $\pg{\min\pg{|\alpha|,|\beta|}: (\alpha,\beta)\in I_{M,N,x,y}}\subseteq [0,C]$.\\
 		Let $(\widehat x,\widehat y) \in(N\Z+x)\times (M\Z+y)$ with $\widehat x,\widehat y>C$ and  $D$ be a multiple of $M$ and of $N$ such that $\widehat x-D,\widehat y-D<-C$. Then $(D\Z+\widehat x)\times (D\Z+\widehat y)\cap I_{M,N,x,y}=\emptyset$ and $$(D\Z+\widehat x) \times (D\Z+\widehat y)\subseteq(N\Z+x)\times (M\Z+y) .$$ As a consequence  $(D\Z+\widehat x)\times (D\Z+\widehat y)\cap A=\emptyset$, which  contradicts \pref{eq_inters_lm}.
 		 	\end{proof}
 
The next step will be to prove that we can verify Strict Positive Definiteness only on certain special sets $X\subseteq\prode$ (see Lemma \ref{lm_SDP_Xenh}).

In view of Lemma \ref{lm_Rto0}, when calculating $\Rmnq(z_\mu\cdot z_\nu)$ and considering the limit for $m+n\to \infty$, the obtained behavior is quite different if $|z_\mu\cdot z_\nu|<1$ or $|z_\mu\cdot z_\nu|=1$. In particular, we will have to treat carefully the cases when $|z_\mu\cdot z_\nu|=1$. 
This happens either if $z_\mu=z_\nu$ (observe that the points in the set $X$ must be distinct but they can have one of the two components in common), or if $z_\mu= e^{i\theta}z_\nu$ with $\theta\in(0,2\pi)$. In this last case we say that the two points $z_\mu,z_\nu\in \esfq$ are {\em antipodal}.
Our strategy to deal with antipodal points is inspired by  \cite{meneg-jean-traira}. We will say that a set  of (distinct) points $Y=\pg{(z_\mu,w_\mu): \mu=1,\ldots, L}$ in $\prode$ is  {\em Antipodal Free} if the following property holds:
\begin{itemize}
\item[(AF)]\quad  if $\mu\neq\nu$ then $|z_\mu\cdot z_\nu|<1$ unless $z_\mu=z_\nu$  and $|w_\mu\cdot w_\nu|<1$ unless $w_\mu=w_\nu$.
\end{itemize}
Of course, since the points in $Y$ are distinct,  if  $z_\mu=z_\nu$ then $|w_\mu\cdot w_\nu|<1$  (resp. if  $w_\mu=w_\nu$ then $|z_\mu\cdot z_\nu|<1$).

\begin{remark}\label{rm_antip1}
Since two distinct points in  $\Omega_2$ are always antipodal, if, for instance, $q=1$, then, in an antipodal free set $Y$ in $\Omega_2\times\Omega_{2p}$,  all the $z_\mu$ are the same and then  $|w_\mu\cdot w_\nu|<1$ for $\mu\neq\nu$. When $p=q=1$ then an antipodal free set $Y$ in $\Omega_2\times\Omega_2$ contains a unique point $(z,w)$. 
\end{remark}
 Consider now an antipodal free set $Y\subseteq \prode$ and  two sets of angles $\Theta=\pg{\theta_\tau: \tau=1,\ldots,t}$ and $\Delta=\pg{\delta_\sigma: \sigma=1,\ldots,s}$ in $[0,2\pi)$.
 We define the {\em enhanced set  associated to} $Y,\,\Theta$ and $\Delta$ as the set 
 \begin{equation}\label{eq_defX}
X=\pg{(e^{i\theta\tau}z_\mu,e^{i\delta_\sigma}w_\mu):\, \mu=1,\ldots, L,\,\tau=1,\ldots,t,\,\sigma=1,\ldots,s}\,.
 \end{equation}
 
Observe that, by construction, the points that appear in $X$ are all distinct (but now there exist many antipodal points among them).

 The following lemma provides a sort of inverse construction.
 \begin{lemma}\label{lm_SsubEnh}
  Given a finite set $S\subseteq \prode$ one can always obtain an antipodal free set $Y\subseteq \prode$  and 
   two sets  $\Theta$ and $\Delta$ of angles in $[0,2\pi)$, such that $S$ is contained in the enhanced set $X$ associated to  $Y,\,\Theta$ and $\Delta$.
 \end{lemma}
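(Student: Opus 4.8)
The plan is to exploit the collinearity equivalence on each sphere. First I observe that for two unit vectors $z,z'\in\esfq$ one has $\m{z\cdot z'}=1$ if, and only if, $z'=e^{i\theta}z$ for some $\theta\in[0,2\pi)$: this is exactly the equality case of the Cauchy--Schwarz inequality, since $\m{z\cdot z'}\le\n{z}\n{z'}=1$ with equality only for linearly dependent unit vectors. Consequently the relation $z\sim z'\Leftrightarrow \m{z\cdot z'}=1$ is an equivalence relation on $\esfq$, and any two representatives of distinct classes have inner product of modulus strictly less than $1$. The same holds on $\esfp$.

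Writing $S=\pg{(\zeta_j,\omega_j):\ j=1,\ldots,K}$, I would apply this relation separately to the first coordinates $\pg{\zeta_j}$ and to the second coordinates $\pg{\omega_j}$. Choosing one representative per class yields finitely many $z_1,\ldots,z_A\in\esfq$ with $\m{z_a\cdot z_{a'}}<1$ for $a\neq a'$, and finitely many $w_1,\ldots,w_B\in\esfp$ with $\m{w_b\cdot w_{b'}}<1$ for $b\neq b'$, together with, for each $j$, indices $a(j),b(j)$ and angles $\alpha_j,\beta_j\in[0,2\pi)$ such that $\zeta_j=e^{i\alpha_j}z_{a(j)}$ and $\omega_j=e^{i\beta_j}w_{b(j)}$.

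Now I set $Y:=\pg{(z_a,w_b):\ a=1,\ldots,A,\ b=1,\ldots,B}$ and take $\Theta:=\pg{\alpha_j:\ j=1,\ldots,K}$ and $\Delta:=\pg{\beta_j:\ j=1,\ldots,K}$, which are finite sets of angles in $[0,2\pi)$. With these choices the enhanced set $X$ associated to $Y,\,\Theta$ and $\Delta$ from \pref{eq_defX} contains, for every $j$, the point $(e^{i\alpha_j}z_{a(j)},e^{i\beta_j}w_{b(j)})=(\zeta_j,\omega_j)$, taking $\mu$ as the index of $(z_{a(j)},w_{b(j)})$ in $Y$; hence $S\subseteq X$. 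The one fact that genuinely needs checking is that $Y$ is antipodal free: for two distinct elements $(z_a,w_b)\neq(z_{a'},w_{b'})$ of $Y$, if $a\neq a'$ then $\m{z_a\cdot z_{a'}}<1$ while if $a=a'$ then $z_a=z_{a'}$, and symmetrically in the second coordinate, which is precisely condition (AF).

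The delicate point — the place I expect the real work to sit — is this independent grouping of the two coordinates: one cannot simply cluster the pairs $(\zeta_j,\omega_j)$, because the enhanced construction rotates the two components by unrelated angles $\theta_\tau,\delta_\sigma$. Passing to the full ``product grid'' $Y$ of the two families of representatives is what makes every required combination of rotations available while keeping (AF) valid. Finally, I would note that the degenerate cases $q=1$ and/or $p=1$ need no separate treatment: when $q=1$ all unit vectors of $\esfq=\Omega_2$ are collinear, so $A=1$ and $Y$ collapses to $\pg{(z_1,w_b)}$, consistently with Remark \ref{rm_antip1}.
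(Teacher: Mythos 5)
Your proof is correct and takes essentially the same route as the paper's: choose representatives of the first and second coordinates modulo the antipodality relation, collect the angles needed to recover the original points, and take $Y$ to be the product grid $Y_1\times Y_2$ of the two families of representatives. The only difference is that you make explicit the Cauchy--Schwarz equality case and the verification of condition (AF) for the product, which the paper leaves implicit.
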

  \begin{proof}
  For a finite set $X_1\subseteq \esfq$ one can select a maximal subset $Y_1$ not containing antipodal points and then define the set $\Theta$  containing  $0$  and all the distinct $\theta\in (0,2\pi)$ that are needed to produce the remaining points as $e^{i\theta}z_\mu$ with $z_\mu\in Y_1$.
   
   For the set $S\subseteq \prode$ one produces with this algorithm a  maximal subset $Y_1$ not containing antipodal points along with a corresponding set  of angles $\Theta$ from all the first coordinates $z$ in $S$, than a  maximal subset $Y_2$ not containing antipodal points along with a corresponding set of angles $\Delta$ from all the second coordinates $w$ in $S$.
   
   Then $Y:=Y_1\times Y_2$ will be such that $S$   is contained in the enhanced set  associated to $Y,\,\Theta$ and $\Delta$.
   \end{proof}

The following two lemmas will make clear why it is useful to consider antipodal free sets.
 \begin{lemma}\label{lm_PDlimit}
  Let $Y=\pg{(z_\mu,w_\mu): \mu=1,\ldots, L}$ in $\prode$  be antipodal free. Then the matrix 
  $$ [R_{m,n}^{q-2}(z_\mu\cdot z_\nu)R_{k,l}^{p-2}(w_\mu\cdot w_\nu)]_\nu^\mu \quad $$ is positive definite provided $n\neq m,\,k\neq l$ and $m+n,k+l$ are large enough.
 \end{lemma}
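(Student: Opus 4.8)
The plan is to show that, under the stated restrictions on the indices, the matrix is as close as we wish to the $L\times L$ identity matrix, and then to invoke the fact that positive definiteness is an open condition among Hermitian matrices. Write $M:=[R_{m,n}^{q-2}(z_\mu\cdot z_\nu)\,R_{k,l}^{p-2}(w_\mu\cdot w_\nu)]_\nu^\mu$. First I would record two structural facts. Since $z_\nu\cdot z_\mu=\overline{z_\mu\cdot z_\nu}$, and likewise for the $w$'s, property \eqref{eq_proprconj_Rmn} gives $M_{\nu\mu}=\overline{M_{\mu\nu}}$, so $M$ is Hermitian. On the diagonal one has $z_\mu\cdot z_\mu=1=w_\mu\cdot w_\mu$, so by \eqref{eq_modulo-Rmn} each diagonal entry equals $R_{m,n}^{q-2}(1)R_{k,l}^{p-2}(1)=1$.

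The crux is to show that every off-diagonal entry tends to $0$ as $m+n\to\infty$ and $k+l\to\infty$, keeping $m\neq n$ and $k\neq l$. Fix $\mu\neq\nu$. Because the points of $Y$ are distinct and $Y$ is antipodal free, the pair $(\mu,\nu)$ falls into one of two cases. If $|z_\mu\cdot z_\nu|<1$, then Lemma \ref{lm_Rto0}, applied with $m\neq n$ (which makes the limit valid at every point of $\D'$, including $\xi=0$, and for every $q\in\N\cup\pg{\infty}$), gives $R_{m,n}^{q-2}(z_\mu\cdot z_\nu)\to0$; together with the bound $|R_{k,l}^{p-2}(w_\mu\cdot w_\nu)|\leq1$ of \eqref{eq_modulo-Rmn}, the product tends to $0$. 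Otherwise $z_\mu=z_\nu$, in which case distinctness forces $w_\mu\neq w_\nu$, so by property (AF) one has $|w_\mu\cdot w_\nu|<1$; the same argument with the two factors interchanged yields $R_{k,l}^{p-2}(w_\mu\cdot w_\nu)\to0$ as $k+l\to\infty$, and again the product tends to $0$. This is precisely where the antipodal free hypothesis does the work: it guarantees that in each off-diagonal slot at least one of the two factors sits strictly inside the disc and is therefore annihilated in the limit.

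Finally, since $Y$ is finite there are only finitely many off-diagonal entries, so there is a threshold beyond which each of them has absolute value smaller than $1/L$. For such indices the Hermitian matrix $M$ has unit diagonal and is strictly diagonally dominant; by Gershgorin's theorem every (real) eigenvalue then lies within distance strictly less than $1$ from $1$ and is hence positive, so $M$ is positive definite. Equivalently, $M\to I_L$ and the positive definite Hermitian matrices form an open set.

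I do not expect a serious obstacle here. The only point demanding care is the legitimate use of the limit in Lemma \ref{lm_Rto0} in both cases: one must keep $m\neq n$ and $k\neq l$ so that the convergence $R^{q-2}_{m,n}(\xi)\to0$ holds at every $\xi\in\D'$ and for all $q\in\N\cup\pg{\infty}$, thereby avoiding the exceptional pair $q=2$, $\xi=0$ flagged after Lemma \ref{lm_Rto0}.
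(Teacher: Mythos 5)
Your argument is correct and follows essentially the same route as the paper's proof: unit diagonal, off-diagonal entries forced to zero by Lemma \ref{lm_Rto0} together with the antipodal-free condition (which guarantees at least one of the two inner products in each off-diagonal slot lies strictly inside the disc), and then strict diagonal dominance of a Hermitian matrix with positive diagonal. The only cosmetic difference is that you invoke Gershgorin where the paper cites \cite[Theorem 6.1.10]{horn-joh-matrix}, which is the same fact.
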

 \begin{proof}
  Actually, 	the diagonal elements of the matrix are all equal to $R_{m,n}^{q-2}(1)R_{k,l}^{p-2}(1)=1$, moreover,
 condition (AF) implies that  if  $z_\mu\cdot z_\nu=1$ then $|w_\mu\cdot w_\nu|<1$ and  if  $w_\mu\cdot w_\nu=1$ then $|z_\mu\cdot z_\nu|<1$.
  As a consequence,  the non-diagonal elements converge to zero by \pref{eq_Rto0}, when $n\neq m,\,k\neq l$ and $min\pg{m+n,k+l}\to \infty$. Then  the matrix, which is  Hermitian and with real positive diagonal, becomes strictly diagonally dominant, thus positive definite (\cite[Theorem 6.1.10]{horn-joh-matrix}).
  \end{proof}

     \begin{lemma}\label{lm_SDP_Xenh}
      Let $q,p\in\N\cup\pg{\infty}$ and $f$ be a  continuous function  which is  PD   on $\prode$.  Then the  following assertions are equivalent:
     \begin{itemize}
     \item[(i)] $f$ is SPD on $\prode$;
     
         \item[(ii)] the matrix $A_X$ defined in \pref{eq-def-AX} is positive definite for every finite set $X$ being the enhanced set associated to some antipodal free set $Y\subseteq \prode$ and  two sets  $\Theta$ and $\Delta$ of angles in $[0,2\pi)$.
         \end{itemize}
      \end{lemma}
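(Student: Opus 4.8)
The plan is to prove the two implications separately, the first being immediate and the second resting on the embedding provided by Lemma \ref{lm_SsubEnh}.

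For (i) $\Rightarrow$ (ii), I would simply recall the characterization established at the beginning of this section: $f$ is SPD on $\prode$ precisely when the matrix $A_X$ of \pref{eq-def-AX} is positive definite for \emph{every} finite set $X\subseteq\prode$ of distinct points. Since an enhanced set is, by its very construction in \pref{eq_defX}, a particular finite set of distinct points, the positive definiteness of $A_X$ for enhanced $X$ is a special instance of (i), and the implication follows at once.

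For the converse (ii) $\Rightarrow$ (i), I would take an arbitrary finite set $S\subseteq\prode$ of distinct points and show that $A_S$ is positive definite; by the same characterization this yields that $f$ is SPD. First I would invoke Lemma \ref{lm_SsubEnh} to produce an antipodal free set $Y$ together with angle sets $\Theta,\Delta$ whose associated enhanced set $X$ satisfies $S\subseteq X$. By hypothesis (ii), $A_X$ is positive definite. Now, since the points of $S$ are among those listed in $X$, and since each entry of $A_X$ in \pref{eq-def-AX} depends only on the two points indexing it, the matrix $A_S$ is exactly the principal submatrix of $A_X$ obtained by retaining the rows and columns indexed by the points of $S$. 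A principal submatrix of a positive definite matrix is again positive definite (see \cite{horn-joh-matrix}), hence $A_S$ is positive definite. As $S$ was arbitrary, $f$ is SPD.

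The only genuinely substantive ingredient is Lemma \ref{lm_SsubEnh}, which guarantees that every finite configuration can be enlarged to an enhanced set; once this is available, the remainder is the standard monotonicity of positive definiteness under passage to principal submatrices. The key point, and the one I would take care to get right, is the direction of this monotonicity: enlarging $S$ to $X$ can only help, because positive definiteness of the larger matrix $A_X$ descends to the smaller $A_S$, which is precisely what we need. I therefore expect no real obstacle beyond correctly identifying the indexing of $S$ with a subset of the indexing of $X$.
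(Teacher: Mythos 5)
Your proposal is correct and follows essentially the same route as the paper: both directions reduce to the characterization of SPD via positive definiteness of $A_S$ for arbitrary finite $S$, with the forward implication being a special case and the converse obtained by embedding $S$ into an enhanced set via Lemma \ref{lm_SsubEnh} and passing to the principal submatrix $A_S$ of $A_X$. No gaps.
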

  \begin{proof} 
  First observe that  $(i)$ is equivalent to:
  \begin{equation*}
 \text{(iii) $A_S$ is a positive definite matrix for every finite set  $S\subseteq \prode$.}
  \end{equation*}
  The implication  $(iii)\Longrightarrow(ii)$ is trivial. In order to prove that  $(ii)\Longrightarrow(iii)$ observe that, given $S$, one can obtain $X$ as described in Lemma \ref{lm_SsubEnh}: since $S\subseteq X$, then  $A_S$ is a principal submatrix of the positive definite matrix $A_X$ and then it is a positive definite matrix itself. 
  \end{proof}

At this point we can prove Proposition \ref{th_progr->spd}. 
 \begin{proof}[Proof of Proposition \ref{th_progr->spd}]
Let  $X$ (finite) be the enhanced set associated to an  antipodal free set $Y\subseteq \prode$  and two sets  $\Theta$ and $\Delta$ of angles in $[0,2\pi)$ and  
 consider the system 
 \begin{equation}\label{eq_sistB}
  \overline c^tB_X^{m,n,k,l}c=0\ \text{for every $(m,n,k,l)\in J$}.
  \end{equation}
  In view of the Lemmas \ref{lm_Ax_sistBx} and \ref{lm_SDP_Xenh}, all we have to do is to prove that this system implies $c=0$.

 Using the property in \pref{eq_propridd_Rmn}, with the notation introduced in \pref{eq_defX} for the elements of $X$,  we have
  $$B_X^{m,n,k,l}=\pq{e^{i (m-n)(\theta_\tau-\theta_\lambda)}e^{i(k-l)(\delta_\sigma-\delta_\zeta) }R_{m,n}^{q-2}(z_\mu\cdot z_\nu)R_{k,l}^{p-2}(w_\mu\cdot w_\nu)}^{\tau,\sigma,\mu}_{\lambda,\zeta,\nu}\,.$$
It is convenient to write this matrix as a block matrix as follows: 
 
$$ B_X^{m,n,k,l}=[R_{m,n}^{q-2}(z_\mu\cdot z_\nu)R_{k,l}^{p-2}(w_\mu\cdot w_\nu)A^{m,n,k,l}]_\nu^\mu$$  where $$A^{m,n,k,l}=[e^{i (m-n)(\theta_\tau-\theta_\lambda)}e^{i(k-l)(\delta_\sigma-\delta_\zeta) }]^{\tau,\sigma}_{\lambda,\zeta}\,.$$
 The vector $c$ will be correspondingly split as
$$c=[c_\mu]_\mu\quad\text{  where }\quad c_\mu=[c_\mu^{\tau\sigma}]_{\tau,\sigma}\,.$$
We have then $$\overline c^tB_X^{m,n,k,l}c=\sum_{\mu,\nu} R_{m,n}^{q-2}(z_\mu\cdot z_\nu)R_{k,l}^{p-2}(w_\mu\cdot w_\nu)\overline{c_\nu}^tA^{m,n,k,l}c_\mu.$$

 Similar to the proof of Proposition \ref{th_spd->progr}, the matrix $A^{m,n,k,l}$ factors as $A^{m,n,k,l}=\overline{b}^tb$ where $$b=[{e^{i (m-n)\theta_\tau}e^{i (k-l)\delta_\sigma}}]^{\tau\sigma}\,,$$
 then we may write 
 \begin{equation}\label{eq_cBc}
 \overline c^tB_X^{m,n,k,l}c=\sum_{\mu,\nu}  \overline{bc_\nu}^tbc_\mu R_{m,n}^{q-2}(z_\mu\cdot z_\nu)R_{k,l}^{p-2}(w_\mu\cdot w_\nu)\,.
 \end{equation} 
 
 Observe that since $Y$ is antipodal free we will be able to use Lemma \ref{lm_PDlimit} in order to discuss this quadratic form.

 We  suppose now for the sake of contradiction that $c\neq0$. Without loss of generality we assume that $c_1^{1,1}\neq0$ and we first aim to prove that 
 \begin{equation}\label{eq_bcneq0} 
 bc_1=\sum_{\tau,\sigma}{e^{i (m-n)\theta_\tau}e^{i (k-l)\delta_\sigma}}c_1^{\tau,\sigma}\neq0
 \end{equation} 
 for certain $(m,n,k,l)\in J$.
 
  Actually, by the Theorem 2.4 and the Lemmas 2.5 and 2.6 in \cite{P-jean-menS1xS1}, which use the theory of linear recurrence sequences, and in particular a generalization of the Skolen-Mahler-Lech  Theorem due to Laurent \cite[Theorem 1]{Laurent89}  (see also \cite{pinkus-spd-herm}),
  we know that  given the angles $\theta_\tau, \delta_\sigma$ and the vector $c_1$, with  $c_1^{1,1}\neq0$, there exist $N,M,x,y\in\N$ such that the function defined in $\Z^2$
  $$  L( \alpha,\beta):=\sum_{\tau,\sigma}{e^{i\, \alpha\,\theta_\tau}e^{i\, \beta\;\delta_\sigma}}c_1^{\tau,\sigma}$$
 is not zero 
 for all $( \alpha,\beta)$ in the set   $P:=(N\Z+x)\times (M\Z+y)$.

  By Lemma \ref{lm_int_inf_2}  applied to $J'$,   there exists a sequence $S:=\pg{(\alpha_i,\beta_i)}\subseteq P\cap J'$ such that $|\alpha_i|,|\beta_i|\to\infty$.
  As a consequence,  \pref{eq_bcneq0} holds true for every  $(m,n,k,l)\in J$ such that $(m-n,k-l)\in S$.
\\	Now we can select $(m-n,k-l)\in S$ with $|m-n|,|k-l|$ as large as we want (which implies that $ m\neq n$, $k\neq l$ and that $m+n$ and $k+l$ are also large).  
For the corresponding  $(m,n,k,l)\in J$, the equation in \pref{eq_sistB} can not be zero in view of Equation \pref{eq_bcneq0} and Lemma \ref{lm_PDlimit}.

 We have then proved that  a nontrivial solution of system \pref{eq_sistB} can not exist.
\end{proof}
\begin{remark}
Observe that in the case $p=q=1$, in view of Remark \ref{rm_antip1}, the sum in Equation \pref{eq_cBc} has only one term which is  $|bc_1|^2$, then the contradiction follows readily after proving \pref{eq_bcneq0}.
\end{remark}

At this point, Theorem \ref{th_main} is a consequence of the Propositions \ref{th_spd->progr} and  \ref{th_progr->spd}. The Theorems \ref{th_main_1p} and \ref{th_main_11} follow from the same two propositions after 
translating back from the expansion in Lemma \ref{lm_charDD} to the usual ones in the Equations \pref{eq-pd-prod-esf-complO2p} and \pref{eq-pd-prod-esf-complO2} (see in the Sections \ref{sec_PD_on_sing} and \ref{sec_PD_on_prod}).

 \section{Characterization of Positive Definiteness on $\esfi\times\esfi$}\label{sec_infty}  
 In this section we aim to prove the following:
 \begin{theorem}\label{th_PDinfty} Let $f:\D\times \D\to\C$ be a continuous function. Then $f$ is PD on $\esfi\times\esfi$ if, and only if,
	\begin{equation}\label{eq:expandcpinfty}
 \begin{array}{c}
	\begin{array}{rcll}
f(\xi,\eta)&=&\displaystyle\sum_{m,n,k,l\in\Z_+} a_{m,n,k,l}R_{m,n}^\infty(\xi)R_{k,l}^\infty(\eta)&\\
 	&=& \displaystyle\sum_{m,n,k,l\in\Z_+} a_{m,n,k,l} {\xi\vphantom{\overline\xi}}^m\overline{\xi}^n\eta^k\overline{\eta}^l,\quad& (\xi,\eta)\in{\D}\times{\D},\end{array}
\\\mbox{where $\sum a_{m,n,k,l}<\infty$ and $a_{m,n,k,l}\geq0$ for all $m,n,k,l\in\Z_+$.}
\end{array}
 	\end{equation}
 	 	Moreover, the series in Equation \pref{eq:expandcpinfty} is uniformly convergent on  ${\D}\times {\D}$.
 \end{theorem}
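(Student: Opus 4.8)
The plan is to prove Theorem \ref{th_PDinfty} by a limiting argument that relates Positive Definiteness on $\esfi\times\esfi$ to Positive Definiteness on the finite-dimensional products $\esfq\times\esfp$, for which the characterization is already available through Lemma \ref{lm_charDD}. The central observation, already noted in the single-sphere case after Equation \pref{eq-pd-esfi}, is that a function is PD on $\esfi$ precisely when it is PD on $\esfq$ for every $q\geq2$; the natural guess is that the analogous statement holds for the products. First I would establish the \emph{sufficiency} of the expansion: if $f$ has the form \pref{eq:expandcpinfty} with nonnegative summable coefficients, then each term $(\xi,\eta)\mapsto R_{m,n}^\infty(\xi)R_{k,l}^\infty(\eta)={\xi}^m\overline\xi^n\eta^k\overline\eta^l$ is PD on $\esfi\times\esfi$ (this follows from the rank-one factorization of the associated kernel, exactly as in the discussion after Lemma \ref{lm_charDD}), and a nonnegative combination of PD kernels is PD. The uniform convergence is immediate from $|R_{m,n}^\infty(\xi)|\leq1$ (by \pref{eq_modulo-Rmn}) together with $\sum a_{m,n,k,l}<\infty$ and the Weierstrass $M$-test.

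The harder direction is \emph{necessity}. Suppose $f$ is continuous and PD on $\esfi\times\esfi$. The key is that $\esfq\times\esfp$ embeds isometrically into $\esfi\times\esfi$ (embed $\C^q$ into $\ell^2(\C)$ on the first factor and $\C^p$ on the second, preserving inner products), so the restriction of the kernel \pref{eq-Kfromfprod} shows that $f$ is PD on every finite product $\esfq\times\esfp$, $q,p\geq2$. By Lemma \ref{lm_charDD} (case $q,p\in\N$, which is \pref{eq-pd-prod-esf-compl}), for each fixed $q,p$ the function $f$ admits an expansion $f(\xi,\eta)=\sum a_{m,n,k,l}^{(q,p)}R_{m,n}^{q-2}(\xi)R_{k,l}^{p-2}(\eta)$ with nonnegative summable coefficients. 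The plan is then to extract the coefficients by orthogonality and pass to the limit $q,p\to\infty$: the disc polynomials $R_{m,n}^{q-2}$ form an orthogonal family with respect to a suitable weight on $\D$, so each $a_{m,n,k,l}^{(q,p)}$ is recovered as an integral of $f$ against the corresponding product of (conjugated, weighted) disc polynomials, and as $q,p\to\infty$ the normalized disc polynomials converge pointwise (on $\D'$) to the monomials ${\xi}^m\overline\xi^n$ by \pref{eq_Rto0} and \pref{eq-def-pol-disc}. A cleaner route, which I would prefer, is to use the consistency of the expansions directly: since the $R_{m,n}^{q-2}$ are linearly independent and the expansion is unique, one shows the coefficients $a_{m,n,k,l}^{(q,p)}$ stabilize as $q,p$ grow, so the limiting coefficients $a_{m,n,k,l}$ are well-defined, nonnegative, and summable, and $f$ equals the corresponding series in the monomials ${\xi}^m\overline\xi^n\eta^k\overline\eta^l$.

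The main obstacle I anticipate is controlling the limit rigorously: one must ensure that the coefficient extraction is uniform enough in $q,p$ so that summability ($\sum a_{m,n,k,l}<\infty$) is preserved in the limit rather than merely the finiteness of each partial sum. Evaluating the series at $(\xi,\eta)=(1,1)$ gives $\sum a_{m,n,k,l}^{(q,p)}=f(1,1)$ for every $q,p$ (using $R_{m,n}^{q-2}(1)=1$ from \pref{eq_modulo-Rmn}), which furnishes a uniform bound on the $\ell^1$ norm of the coefficient arrays and lets one invoke a diagonal/Fatou argument to pass to the limit while keeping $\sum a_{m,n,k,l}\leq f(1,1)<\infty$ and $a_{m,n,k,l}\geq0$. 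The delicate point is identifying the limit of $f$ with the monomial series on all of $\D\times\D$ including the boundary, where \pref{eq_Rto0} no longer forces the disc polynomials to vanish; here I would lean on the uniform convergence established in the sufficiency step to identify the limit series with $f$ by continuity and density, closing the equivalence.
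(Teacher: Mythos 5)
Your sufficiency direction and the opening reduction (``PD on $\esfi\times\esfi$ iff PD on $\prode$ for all finite $q,p\geq2$'') match the paper, which obtains positive definiteness of each monomial term from the Schur Product Theorem applied to the inner-product kernel $g(\xi)=\xi$ --- essentially your ``rank-one factorization'' remark --- and the uniform convergence argument is the same. The genuine gap is in the necessity direction. The ``cleaner route'' you prefer, namely that the coefficients $a^{(q,p)}_{m,n,k,l}$ \emph{stabilize} as $q,p$ grow, is false: the families $\{R^{q-2}_{m,n}\}$ for different $q$ are different bases and the coefficients genuinely depend on $q$. For instance $|\xi|^2=\tfrac{q-1}{q}R^{q-2}_{1,1}(\xi)+\tfrac1q R^{q-2}_{0,0}(\xi)$, so $a^{(q)}_{1,1}=\tfrac{q-1}{q}$ for every $q$: the coefficients converge but never stabilize. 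What must actually be proved is that the nonnegative arrays $a^{(q,p)}_{m,n,k,l}$ converge as $q,p\to\infty$ and that the limiting array still represents $f$. The uniform bound $\sum a^{(q,p)}_{m,n,k,l}=f(1,1)$ plus a diagonal/Fatou argument yields a subsequential limit array with $\sum a_{m,n,k,l}\leq f(1,1)$, but it neither identifies that array with $f$ nor rules out loss of mass; and the pointwise convergence $R^{q-2}_{m,n}\to R^{\infty}_{m,n}$ (which is not what \pref{eq_Rto0} asserts --- that lemma concerns $m+n\to\infty$ at fixed $q$) does not by itself justify interchanging the limit in $q,p$ with the infinite sum. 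This identification step is exactly the hard core of Schoenberg-type ``dimension walk to infinity'' theorems, and in the product setting it is a double limit that your sketch does not disentangle.

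The paper takes a different route that avoids this limit entirely. It freezes $\eta$ and applies Lemma \ref{thm:technical} with $L=2$, $w_1=(\eta,w)$, $w_2=(1,0)\in\Omega_4$ and the three choices $c=1,-1,i$, so that the known Christensen--Ressel theorem on the single sphere $\esfi$ already gives $f(\xi,\eta)=\sum_{m,n}\varphi_{m,n}(\eta)\,{\xi\vphantom{\overline\xi}}^m\overline\xi^{\,n}$ with explicit coefficient functions $\varphi_{m,n}$. The crux is then to show that each $\varphi_{m,n}$ is itself PD on $\esfi$; this is done by identifying $\varphi_{m,n}(\eta)$ with the normalized derivative $\frac1{m!n!}\partial^{m+n}f(0,\eta)/\partial{\xi\vphantom{\overline\xi}}^m\partial\overline\xi^{\,n}$ and comparing with the expansion of $(\xi,A)\mapsto f(\xi,Ae_p\cdot e_p)$ on $\Omega_\infty\times U(p)$ from \cite{P-berg-porcu-Omega-inf}, together with Remark \ref{rem_U_Om}; a second application of Christensen--Ressel to each $\varphi_{m,n}$ then produces \pref{eq:expandcpinfty}. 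To salvage your approach you would have to supply the convergence-of-coefficients argument in the product setting (essentially redoing the analysis of \cite{P-berg-porcu-Omega-inf}); as written, the necessity half is not yet a proof.
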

 
 In the proof we will use ideas from \cite{P-berg-porcu-Omega-inf} 
 and we will need the following lemma, whose proof is analogous to that of Lemma 4.1 in \cite{P-berg-porcu-Omega-inf} and will be omitted. 
 \begin{lemma}\label{thm:technical}
 Let $q,p\in\N\cup\pg\infty,\;q,p\geq2$ and  $f:\D\times \D\to \C$ be a continuous and PD function on $\prode$. Given   points $w_1,\ldots,w_L\in\esfp$ and numbers $c_1,\ldots,c_L\in \C$, the function  $F:\D\to \C$ defined by
 \begin{equation}\label{eq:sum1}
 F(\xi)=\sum_{j,k=1}^L f(\xi,w_j\cdot w_k)c_j\overline{c_k}
 \end{equation} is continuous and PD on $\esfq$. 
 \end{lemma}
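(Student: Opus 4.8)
The plan is to treat the two assertions separately: continuity is essentially immediate, while positive definiteness follows from a tensorization of the coefficients together with the positive definiteness of $f$ on the product.

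First, continuity of $F$ is clear. For each fixed pair $(j,k)$ the map $\xi\mapsto f(\xi,\,w_j\cdot w_k)$ is continuous, since it is the restriction of the continuous function $f$ to the horizontal slice $\D\times\pg{w_j\cdot w_k}$, and $F$ is a finite linear combination of such maps. Hence $F$ is continuous on $\D$.

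For the positive definiteness I would verify the defining inequality directly. Take distinct points $z_1,\ldots,z_M\in\esfq$ and scalars $d_1,\ldots,d_M\in\C$. Substituting the definition of $F$ into the quadratic form associated with $F$ on $\esfq$ and rearranging the finite double sum yields
$$\sum_{\mu,\nu=1}^M d_\mu\overline{d_\nu}\,F(z_\mu\cdot z_\nu)=\sum_{\mu,\nu=1}^M\sum_{j,k=1}^L (d_\mu c_j)\overline{(d_\nu c_k)}\,f(z_\mu\cdot z_\nu,\,w_j\cdot w_k),$$
where I have used $d_\mu\overline{d_\nu}\,c_j\overline{c_k}=(d_\mu c_j)\overline{(d_\nu c_k)}$. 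The right-hand side is exactly the quadratic form \pref{eq-quad-form} for the PD function $f$ on $\prode$, evaluated at the points $(z_\mu,w_j)\in\prode$, indexed by the pairs $(\mu,j)$, with the tensorized coefficients $e_{\mu,j}:=d_\mu c_j$. Since $f$ is PD on $\prode$, this expression is nonnegative, which is precisely what is needed for $F$ to be PD on $\esfq$.

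The single point requiring care, and the one I expect to be the only real obstacle, is that the list $(z_\mu,w_j)$ need not consist of distinct points: although the $z_\mu$ are distinct, the given $w_1,\ldots,w_L$ may repeat, so $(z_\mu,w_j)$ and $(z_\mu,w_k)$ coincide whenever $w_j=w_k$, whereas the definition of positive definiteness is phrased over distinct points. I would dispose of this by the standard observation that positive definiteness, as defined through distinct points, automatically produces a positive semidefinite matrix for an arbitrary finite list of points: if two points coincide one merges them, replacing their two coefficients by their sum without changing the value of the quadratic form, and iterating reduces to a configuration of distinct points on which the inequality holds by hypothesis. Applying this to the list $(z_\mu,w_j)$ with coefficients $d_\mu c_j$ gives the nonnegativity of the sum above and completes the proof.
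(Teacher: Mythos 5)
Your proof is correct: the tensorization of coefficients $e_{\mu,j}=d_\mu c_j$ at the points $(z_\mu,w_j)$, together with the standard merging argument for possibly repeated points, is exactly the expected argument. The paper itself omits the proof of this lemma, referring to the analogous Lemma 4.1 of \cite{P-berg-porcu-Omega-inf}, which proceeds in essentially the same way, so your write-up simply supplies the details the authors chose to leave out.
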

  \begin{proof}[Proof of Theorem \ref{th_PDinfty}]
  First observe that $f$  is PD on $\esfi\times\esfi$ if, and only if, $f$  is PD on $\prode$ for every $q,p\geq2$.

  It is also easy to see that the function  $g(\xi)=\xi$, $\xi\in{\D}$, 
  is PD on $\Omega_{2q}$ for every $q\geq2$, as well as its conjugate. 
  By the Schur Product Theorem for Positive Definite kernels, cf. \cite[Theorem 3.1.12]{Berg}, one obtains  that also $h(\xi)={\xi\vphantom{\overline\xi}}^{m}\overline{\xi}^{n}$ is PD on $\Omega_{2q}$ for $q\geq2$ and $m,n\in\Z_+$, and that ${\xi\vphantom{\overline\xi}}^m\overline{\xi}^n\eta^k\overline{\eta}^l$ is  PD on $\Omega_{2q}\times\Omega_{2p}$ for $q,p\ge 2$  and $m,n,k,l\in\Z_+$.
  As a consequence,  any function of the form \pref{eq:expandcpinfty} is continuous and PD on $\Omega_{2q}\times\Omega_{2p}$ for every $q,p\geq2$, and then on $\Omega_{\infty}\times\esfi$ too. 
 
 \par \medskip

  Now let the continuous function $f:{\D}\times {\D}\to\C$ be PD on $\esfi\times\esfi$. 
 For $\eta\in\D,\,c\in\C$, consider the special case of \pref{eq:sum1} with $L=2,q=\infty,p=2$, $w_1=(\eta,w), w_2=(1,0)\in\Omega_4$, $c_1=1, c_2=c$, that is,
   \begin{equation}\label{eq:sum2}
   F_{\eta,c}(\xi)=f(\xi,1)(1+|c|^2)+f(\xi,\eta)\overline{c}+f(\xi,\overline{\eta})c.
   \end{equation}
By Lemma~\ref{thm:technical},
   $F_{\eta,c}$ is a continuous  PD function  on $\Omega_\infty$. Then, using a theorem due to Christensen and Ressel, see \cite{chris-ressel-pd}, it can be written as
   $$
   F_{\eta,c}(\xi)=\sum_{m,n\in\Z_+} a_{m,n}\pt{\eta,c} {\xi\vphantom{\overline\xi}}^m\overline{\xi}^n,
   $$
   where $a_{m,n}\pt{\eta,c}\ge 0$ are uniquely determined  and satisfy $\sum_{m,n\in\Z_+} a_{m,n}\pt{\eta,c}<\infty.$
 
 By using  $c=1,-1,i$ and proceeding as in the end of the proof of \cite[Theorem 1.2]{P-berg-porcu-Omega-inf}, one obtains that  
  \begin{equation}\label{eq-cara-f}
   f(\xi,\eta)=\frac{1-i}4F_{\eta,1}(\xi)-\frac{1+i}4F_{\eta,-1}(\xi)+\frac{i}2F_{\eta,i}(\xi)=\sum_{m,n\in\Z_+} \varphi_{m,n}(\eta){\xi\vphantom{\overline\xi}}^m\overline{\xi}^n,
   \end{equation}
   where
   $$
   \varphi_{m,n}(\eta):=\frac{1-i}4a_{m,n}(\eta,1)-\frac{1+i}4a_{m,n}(\eta,-1)+\frac{i}2a_{m,n}(\eta,i), \quad \eta\in\D\,,
   $$
  and then  
   \begin{equation}\label{eq-serie-phi-finita}
   \m{\sum_{m,n\in\Z_+}\varphi_{m,n}(\eta)}<\infty,\qquad \eta\in{\D}.  
   \end{equation}
 
  Consider now $p\geq2$ and  the function $\widetilde  f_p:\D\times U(p):(\xi,A)\mapsto f(\xi,A e_p\cdot e_p)$, where $e_p=(1,0,\ldots,0)\in \esfp$.
By construction, $\widetilde f_p$ is continuous and  PD on $\Omega_{\infty}\times  U(p)$. 
By Theorem 1.3 in \cite{P-berg-porcu-Omega-inf}, we can expand $\widetilde f_p$ as
$$\widetilde f_p(\xi, A )=\sum_{m,n\in\Z_+} \widetilde\varphi^{(p)}_{m,n}(A)R^{\infty}_{m,n}(\xi)=\sum_{m,n\in\Z_+} \widetilde\varphi^{(p)}_{m,n}(A){\xi\vphantom{\overline\xi}}^m\overline{\xi}^n\,,$$
where $\widetilde\varphi^{(p)}_{m,n}$ are continuous PD functions on $U(p)$.

By derivation one has that
$$ \widetilde  \varphi^{(p)}_{m,n}(A) =\frac1{m!n!}\frac{\partial^{m+n}\widetilde f_p(0,A)}{\partial {\xi\vphantom{\overline\xi}}^m\partial\overline{\xi}^n}$$
and
  \begin{equation}\label{eq_phi=der}
     \varphi_{m,n}(\eta) =\frac1{m!n!}\frac{\partial^{m+n}f(0,\eta)}{\partial {\xi\vphantom{\overline\xi}}^m\partial\overline{\xi}^n},
     \end{equation}
but by construction 
 $$\widetilde  \varphi^{(p)}_{m,n}(A)= \frac1{m!n!}\frac{\partial^{m+n}\widetilde f_p(0,A)}{\partial {\xi\vphantom{\overline\xi}}^m\partial\overline{\xi}^n}=\frac1{m!n!}\frac{\partial^{m+n}f(0,Ae_p\cdot e_p)}{\partial {\xi\vphantom{\overline\xi}}^m\partial\overline{\xi}^n}= \varphi_{m,n}(Ae_p\cdot e_p).
    $$
 By Remark \ref{rem_U_Om} we deduce that  $\varphi_{m,n}$ is continuous and  PD on $\Omega_{2p}$, for every $p\geq2$. 
  As a consequence, $\varphi_{m,n}$ is PD on $\esfi$ and thus  we can again use the  theorem by   Christensen and Ressel, in order to conclude that for every  $m,n$, 
          $$
            \varphi_{m,n}(\eta) = \sum_{k,l\in\Z_+} a_{m,n,k,l}\,\eta^k\overline{\eta}^l, \quad \eta\in{\D}\,,
            $$ where ${a_{m,n,k,l}}\geq0$, for every $k,l\in\Z_+$, and 
        $    \sum_{k,l\in\Z_+} a_{m,n,k,l}<\infty\,.  $
        Thus,
        $$
        f(\xi,\eta)=\sum_{m,n\in\Z_+}\sum_{k,l\in\Z_+} a_{m,n,k,l}\,{\xi\vphantom{\overline\xi}}^m\overline{\xi}^n\eta^k\overline{\eta}^l\,,
        $$
        and then $\displaystyle\sum_{m,n,k,l\in\Z_+}a_{m,n,k,l}<\infty.$
     \end{proof}

 \section{A connection with the cases  $S^1$ and $S^1\times S^1$}
\label{sec_S1}
 In this section we aim to show
 that  one can deduce, from Theorem \ref{th_main_11},  the characterization of Strict Positive Definiteness on $S^1\times S^1$ proved in \cite{P-jean-menS1xS1},
 namely, that 
  a continuous function $f:[-1,1] \times [-1,1] \to \C$    
   which is  PD   on $S^1\times S^1$,  is also SPD  on $S^1\times S^1$ if, and only if, considering its expansion as in \pref{eq-pd-esfSd},
 the set 
 $
  \{(m,k)\in\Z^2: a_{|m|,|k|}>0\}
  $
  intersects every product of full arithmetic progressions in $\Z$, that is, 
  \begin{equation}\label{eq_inters_S11}
	\{(m,k)\in\Z^2: a_{|m|,|k|}>0\}\cap (N\Z+x)\times (M\Z+y)\neq \emptyset\qquad \mbox{for every $N,M,x,y\in\N$.}
	\end{equation}  

Actually, condition \pref{eq_inters_S11} has more similarities with the conditions we obtain here in the Theorems \ref{th_main}, \ref{th_main_1p} and \ref{th_main_11} for the complex spheres, where an intersection with  every product of full arithmetic progressions in $\Z$ is required, rather than with the known conditions for real spheres in higher dimensions, where only progressions of step 2 are involved (see Equations \pref{eq_inters_Sd} and \pref{eq_inters_SpSq}). 

The polynomials  $P_m^0$ in \pref{eq-pd-esfSd} are also known as  Tchebichef polynomials of the first kind (see \cite[page 29]{szego}) and can be written as  $P_m^{0}(\cos\phi)=\cos(m\phi)$, $\phi\in[0,\pi]$. As a consequence, a way of writing  \pref{eq-pd-esfSd} often used in literature  when $p=q=1$ is the following:
   \begin{equation}\label{eq_expS1S1}
    f(\cos(\phi),\cos(\psi))=\sum_{m,k\in\Z_+} a_{m,k}\cos(m\phi)\cos(k\psi),\quad \phi,\psi\in[0,\pi].
    \end{equation}

Below, we will show that one can establish a correspondence between PD (and between SPD) functions on  $S^1\times S^1$ and a subset of those on $\Omega_2\times\Omega_2$. We will do it first for the case of a single sphere.

\begin{lemma}\label{lm_bij_1}
There exists a bijection between PD (resp. SPD) functions on $S^1$ and PD (resp. SPD) functions on $\Omega_2$ which are invariant under conjugation, that is, $f(e^{i\phi})=f(e^{-i\phi}),\ \phi\in[0,2\pi)$.
\end{lemma}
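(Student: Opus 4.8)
The plan is to construct the bijection explicitly using the well-known relationship between Positive Definite functions on $S^1$ and those on $\Omega_2$, and then verify that the image consists precisely of the conjugation-invariant functions. Recall that a PD function on $S^1$ is given via $K(v,v')=f(v\cdot_\R v')$ with $v,v'\in S^1\subset\R^2$, while a PD function on $\Omega_2$ is given via $K(z,z')=g(z\cdot z')$ with $z,z'\in\Omega_2\subset\C^2$. The key geometric observation is that points of $S^1$ embed naturally into $\Omega_2$: writing $v=(\cos\phi,\sin\phi)\in S^1$ we associate the point $z=(\cos\phi+i\sin\phi,0)=(e^{i\phi},0)\in\Omega_2$, or more usefully we note that for $v,v'\in S^1$ one has $v\cdot_\R v'=\cos(\phi-\psi)$, which is exactly the real part of $e^{i(\phi-\psi)}=z\cdot z'$ for the corresponding points $z=(e^{i\phi},0)$, $z'=(e^{i\psi},0)$ in $\Omega_2$.

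First I would set up the correspondence at the level of the functions. Given $f:[-1,1]\to\C$ continuous, one defines $g:\partial\D\to\C$ by $g(e^{i\theta}):=f(\cos\theta)$; since $\cos\theta=\cos(-\theta)$, the function $g$ is automatically invariant under conjugation. Conversely, given a conjugation-invariant continuous $g:\partial\D\to\C$, the value $g(e^{i\theta})$ depends only on $\cos\theta$, so one can define $f(\cos\theta):=g(e^{i\theta})$, and this $f$ is well-defined and continuous on $[-1,1]$. This establishes a bijection between continuous functions on $[-1,1]$ and conjugation-invariant continuous functions on $\partial\D$. The content of the lemma is then to show that this bijection restricts to one between the PD (resp.\ SPD) functions.

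Next I would transfer the PD and SPD properties. Using the expansions, $f$ is PD on $S^1$ iff $f(\cos\theta)=\sum_{m\in\Z_+}a_m\cos(m\theta)$ with $a_m\ge 0$ and $\sum a_m P_m^0(1)<\infty$ (the Schoenberg characterization \pref{eq-scho} with $(q-1)/2=0$), while $g$ is PD on $\Omega_2$ iff $g(e^{i\theta})=\sum_{m\in\Z}b_m e^{im\theta}$ with $b_m\ge0$ and $\sum b_m<\infty$ (characterization \pref{eq-pd-prod1}). The conjugation invariance of $g$ forces $b_m=b_{-m}$, and since $2\cos(m\theta)=e^{im\theta}+e^{-im\theta}$, matching the two expansions gives $a_0=b_0$ and $a_m=2b_m$ for $m\ge1$. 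Thus the nonnegativity of the $a_m$ is equivalent to the nonnegativity of the $b_m$, giving the PD correspondence. For the SPD part I would compare the two quadratic-form conditions: since the embedding $v\mapsto(e^{i\phi},0)$ sends a set of distinct points on $S^1$ to distinct points of $\Omega_2$ with matching inner products (up to the real-part identification that is rendered exact by conjugation invariance), the matrix $[f(v_\mu\cdot_\R v_\nu)]$ coincides with $[g(z_\mu\cdot z_\nu)]$; hence strict positivity of one quadratic form is equivalent to strict positivity of the other, and SPD of $f$ on $S^1$ matches SPD of $g$ on $\Omega_2$ among conjugation-invariant functions.

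The main obstacle I anticipate is the SPD direction, specifically verifying that testing the kernel $g$ on \emph{all} finite subsets of $\Omega_2$ is equivalent to testing $f$ on finite subsets of $S^1$. The subtlety is that $\Omega_2$ is much larger than the embedded copy of $S^1$: a generic point $z=(z_1,z_2)\in\Omega_2$ has two complex coordinates, and $z\cdot z'$ ranges over all of $\partial\D$, not merely over the values realized by the $S^1$-embedding. However, because $g$ is conjugation-invariant and its value depends only on the argument of $z\cdot z'$ through $\cos(\arg(z\cdot z'))$, I would argue that for SPD purposes the relevant data reduces to the arguments $\theta_{\mu\nu}=\arg(z_\mu\cdot z_\nu)$, and any admissible collection of such arguments is already realized by suitable points on the embedded $S^1$. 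Making this reduction precise—showing no strictly positive definiteness is lost or gained by passing to the full sphere—will be the crux, and it is here that the conjugation invariance hypothesis is genuinely used, since it is exactly what collapses the extra degrees of freedom in $\Omega_2$ back onto the $S^1$ data.
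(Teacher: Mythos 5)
Your proposal has a genuine gap, and it stems from a misreading of the notation: in this paper $\Omega_{2q}$ denotes the unit sphere in $\C^q$, so $\Omega_2$ is the unit sphere in $\C^1=\C$, i.e.\ the circle $\partial\D$ itself --- not a sphere in $\C^2$ (that would be $\Omega_4$). This is precisely why the paper takes functions on $\Omega_2$ to be defined on $\partial\D$: for $z,z'\in\partial\D\subset\C$ one has $z\cdot z'=z\overline{z'}\in\partial\D$, whereas for the sphere in $\C^2$ the inner product would range over all of $\D$. Because of this misreading you embed $S^1$ as a proper subset of your ``$\Omega_2$'' and are then forced to ask whether testing strict positive definiteness on arbitrary finite subsets of $\Omega_2$ reduces to testing on the embedded circle. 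You correctly flag this as the crux of the SPD direction, but you do not resolve it, so the implication ``$f$ SPD on $S^1$ implies $g$ SPD on $\Omega_2$'' is missing from your argument.

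In the correct setting the difficulty evaporates, and this is how the paper proceeds: the map $A:\Omega_2\to S^1$, $e^{i\phi}\mapsto(\cos\phi,\sin\phi)$, is a \emph{bijection}, and with $C(e^{i\theta})=\cos\theta$ one has $C(w\cdot w')=Aw\cdot_\R Aw'$ for all $w,w'\in\Omega_2$. Hence for a conjugation-invariant $f$ (whose values depend only on the real part of the argument) and $\widehat f:=f\circ C^-$ with $C^-(x)=e^{i\arccos x}$, the associated kernels satisfy $\widehat K(Aw,Aw')=K(w,w')$; finite sets of distinct points on the two spheres correspond bijectively and the quadratic forms in the definition of PD and SPD are literally identical, which gives both equivalences at once. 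Your Fourier-coefficient matching ($a_0=b_0$, $a_m=2b_m$ for $m\ge1$) is correct and does establish the PD correspondence --- the paper records it separately, after the lemma, in order to translate the SPD condition on the coefficients --- but coefficient positivity alone cannot yield the SPD statement, so the kernel-level identification above is the step your argument still needs.
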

\begin{proof}
Let $f:\partial\D\to \C$ be a PD function on $\Omega_2$  satisfying  $f(e^{i\phi})=f(e^{-i\phi})$, then it is real valued and it only depends on the real part. 
\\
Consider the bijection
$$A:\Omega_2\to S^1 :e^{i\phi}\mapsto (\cos(\phi),\sin(\phi))$$
and the surjective map

$$C:\partial\D\to[-1,1]:e^{i\phi}\mapsto \cos(\phi)\,,$$
 which admits a right inverse $C^-:x\mapsto e^{i\arccos(x)}$. 
Then $C\circ C^-=id_{[-1,1]}$ and	 since $f$ only depends on the real part,
	\begin{equation}\label{eq-fCC}
	f(C^-\circ C(e^{i\phi}))=f(e^{i\phi}), \quad e^{i\phi}\in\partial \D.
	\end{equation}	
Also observe that
	\begin{equation}\label{prod-int-w-compl}
	C(w\cdot w')=Aw\cdot_\R Aw', \quad w,w'\in\Omega_2.
	\end{equation}

Therefore, the bijection in the claim is the following:
$$B:f\mapsto \widehat f:=f\circ {C^-}\,,$$
whose inverse is given by $$B^{-1}: \widehat f\mapsto f:=\widehat f \circ C\,.$$
Actually, for kernels $K$ and $\widehat K$ associated, respectively, to $f$ and $\widehat f$,  it holds, by (\ref{eq-fCC}-\ref{prod-int-w-compl}),
$$\widehat K(Aw,Aw')=\widehat f(Aw\cdot_\R Aw')=f(C^-\circ C (w\cdot w'))=f(w\cdot w')=K(w,w'),$$
then  the definition of  PD (resp. SPD) in \pref{eq-quad-form-geral} becomes equivalent for the two kernels.
\end{proof}

The case on a product of spheres is very similar.
\begin{lemma}\label{lm_bij_11}
	There exists a bijection between PD (resp. SPD) functions on $S^1\times S^1$ and PD (resp. SPD) functions on $\Omega_2\times\Omega_2$ that 
	are invariant under conjugation in both variables, that is, $f(e^{i\phi},e^{i\psi})=f(e^{-i\phi},e^{i\psi})=f(e^{i\phi},e^{-i\psi}),\ \phi,\psi\in[0,2\pi)$.	
\end{lemma}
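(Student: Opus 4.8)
The plan is to mirror almost verbatim the proof of Lemma \ref{lm_bij_1} for the single-sphere case, since the product structure introduces no new difficulties beyond carrying a second variable through the same argument. First I would set up the analogous maps in each factor. Recalling the bijection $A:\Omega_2\to S^1:e^{i\phi}\mapsto(\cos(\phi),\sin(\phi))$ and the surjection $C:\partial\D\to[-1,1]:e^{i\phi}\mapsto\cos(\phi)$ with right inverse $C^-:x\mapsto e^{i\arccos(x)}$ from the previous lemma, I would form the product maps
\[
A\times A:\Omega_2\times\Omega_2\to S^1\times S^1,\qquad
C\times C:\partial\D\times\partial\D\to[-1,1]\times[-1,1],
\]
the latter admitting the right inverse $C^-\times C^-$. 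The key identities carry over componentwise: $(C\times C)\circ(C^-\times C^-)=\mathrm{id}$, and, by \pref{prod-int-w-compl} applied in each variable, $(C\times C)(w\cdot w',\,v\cdot v')=(Aw\cdot_\R Aw',\,Av\cdot_\R Av')$ for $w,w',v,v'\in\Omega_2$.

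Next I would exploit the conjugation-invariance hypothesis. A PD function $f:\partial\D\times\partial\D\to\C$ on $\Omega_2\times\Omega_2$ satisfying $f(e^{i\phi},e^{i\psi})=f(e^{-i\phi},e^{i\psi})=f(e^{i\phi},e^{-i\psi})$ is real-valued and depends only on the real parts of its two arguments (one applies the single-variable reasoning of Lemma \ref{lm_bij_1} separately in each slot). Consequently the analogue of \pref{eq-fCC} holds jointly:
\[
f\bigl((C^-\circ C)(e^{i\phi}),\,(C^-\circ C)(e^{i\psi})\bigr)=f(e^{i\phi},e^{i\psi}).
\]
I would then define the candidate bijection $B:f\mapsto\widehat f:=f\circ(C^-\times C^-)$, with proposed inverse $B^{-1}:\widehat f\mapsto\widehat f\circ(C\times C)$, exactly paralleling the single-sphere construction.

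Finally I would verify that $B$ respects PD and SPD. For kernels $K$ and $\widehat K$ associated respectively to $f$ and $\widehat f$ via \pref{eq-Kfromfprod}, the same chain of equalities as before gives
\[
\widehat K\bigl((Aw,Av),(Aw',Av')\bigr)=f(w\cdot w',\,v\cdot v')=K\bigl((w,v),(w',v')\bigr),
\]
so the quadratic forms in \pref{eq-quad-form-geral} coincide under the identification $A\times A$, making the PD (resp. SPD) conditions equivalent. One must also check that the image of $B$ is exactly the conjugation-invariant PD (resp. SPD) functions on $\Omega_2\times\Omega_2$ and that $B^{-1}$ lands in the PD (resp. SPD) functions on $S^1\times S^1$, but these are immediate: $\widehat f=f\circ(C^-\times C^-)$ is automatically conjugation-invariant since $C^-$ ignores the sign of the phase, and conversely any conjugation-invariant $f$ is recovered as $B^{-1}(\widehat f)$. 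I do not anticipate a genuine obstacle here; the only point demanding a little care is confirming that invariance in \emph{both} variables (rather than merely joint invariance under simultaneous conjugation) is what guarantees dependence on both real parts separately, and that $A\times A$ is a genuine bijection of the product spheres so that distinct-point configurations correspond bijectively under the identification.
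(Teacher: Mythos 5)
Your proposal is correct and follows essentially the same route as the paper: the paper's own proof simply observes that conjugation invariance in both variables forces $f$ to be real-valued and to depend only on the real parts of both arguments, and then defines the bijection $B:f\mapsto \widehat f:=f\circ(C^-\times C^-)$, referring back to the argument of Lemma \ref{lm_bij_1} exactly as you do. Your write-up just spells out componentwise the details the paper leaves implicit.
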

\begin{proof}
Consider  a  function $f:\partial\D\times\partial\D \to \C$ which is PD on $\Omega_2\times\Omega_2$ and satisfies  $f(e^{i\phi},e^{i\psi})=f(e^{-i\phi},e^{i\psi})=f(e^{i\phi},e^{-i\psi})$, then it is real valued and it only depends on the real part of both $e^{i\phi}$ and $e^{i\psi}$. 

Thus the proof follows the same lines as that of Lemma \ref{lm_bij_1}, where now  the bijection is defined as 
$$B:f\mapsto \widehat f(\vartriangle,\star):=f( {C^-(\vartriangle),C^-(\star))}\,.$$
\end{proof}

Now we need to establish the correspondence between the coefficients in the expansions of $f$ and $\widehat f$.

For the single sphere case,
a  continuous  function  $f$ which is  PD   on $\Omega_2$ can be written as in \pref{eq-pd-prod1}.
	The condition that $f$ is   invariant under conjugation, assumed in Lemma \ref{lm_bij_1},
 	is equivalent to  $a_m=a_{-m},\ m\in\Z$, then we can rewrite  $$f(e^{i\phi})=a_0+\sum_{m\in\N} {a_{m}} (e^{im\phi}+e^{-im\phi})= a_0+\sum_{m\in\N} {2a_{m}} \cos(m\phi)$$
 	 and the  function $\widehat f$ corresponding to $f$ in the bijection from Lemma \ref{lm_bij_1} can be written as 
	$$\widehat f(\cos(\phi))
=f(C^-(\cos(\phi)))=f(e^{i\phi}) 
 	 	= a_0+\sum_{m\in\N} {2a_{m}} \cos(m\phi)\,.$$
If we consider the Schoenberg coefficients $\widehat a_m,\,m\in\Z_+$, for $\widehat f$, that is,  
$$ \widehat f(\cos(\phi))=\widehat a_0+\sum_{m\in\N} {\widehat a_{m}} (\cos(m\phi))\,,$$
we obtain the relation  
\begin{equation}\label{eq_relcoefS1O2}
a_{m}>0\Longleftrightarrow \widehat a_{|m|}>0,\ \ m\in\Z\,.
\end{equation}
Now, by \cite{P-valdir-claudemir-spd-compl},  the condition for $f$ to be  SPD on $\Omega_2$ is
 	 \begin{equation}\label{eq_intersO2}
\pg{m\in\Z:\ a_{m}>0} \cap (N\Z+x)\neq \emptyset\qquad \mbox{for every $N,x\in\N$,}
 	 	 \end{equation}
which then translates, via \pref{eq_relcoefS1O2}, to the known condition (see  \cite{P-valdir-claudemir-spd-compl,barbosa-men})
\begin{equation}
 	 \pg{m\in\Z:\ a_{|m|}>0} \cap (N\Z+x)\neq \emptyset\qquad \mbox{for every $N,x\in\N$.}
 	 	 \end{equation}

Again, when considering the product of two spheres, the argument is similar.
 	 A continuous function $f$, which is  PD  on $\Omega_2\times\Omega_2$, is written as in \pref{eq-pd-prod-esf-complO2}
 	 	and 	the condition that $f$ is invariant  under conjugation in both variables, assumed in Lemma \ref{lm_bij_11},
 	 	 	 	is equivalent to 
	 $$a_{m,k}=a_{-m,k}=a_{m,-k}=a_{-m,-k},\ m,k\in\Z.$$ 
Then,  proceeding as above, one obtains 
 	 \begin{eqnarray}
\nonumber f(e^{i\phi},e^{i\psi})&=& a_0+\sum_{m\in\N} {2a_{m,0}} \cos(m\phi)+\sum_{k\in\N} {2a_{0,k}} \cos(k\psi)+\sum_{m,k\in\N} {4a_{m,k}} \cos(m\phi)\cos(k\psi)\\
 	 	&=&\widehat f(\cos\phi,\cos\psi)\,.
\end{eqnarray}	 	 	 
If we denote by  $\widehat a_{m,k},\,m,k\in\Z_+$, the coefficients in the expansion of  $\widehat f$ as in \pref{eq_expS1S1}, 
we obtain the relation 
\begin{equation}\label{eq_relcoefS1O2_quad}
a_{m,k}>0\Longleftrightarrow \widehat a_{|m|,|k|}>0\ \ m,k\in\Z\,.
\end{equation}

Finally,  by Theorem \ref{th_main_11},  the condition for $f$ to be SPD on $\Omega_2\times\Omega_2$ is 
 	 \begin{equation}\label{eq_intersO2O2}
 	\pg{(m,k)\in\Z^2:\ a_{m,k}>0} \cap  (N\Z+x)\times (M\Z+y)\neq \emptyset\qquad \mbox{for every $N,M,x,y\in\N$,}
 	 	 	 \end{equation}
 which then translates, via \pref{eq_relcoefS1O2_quad}, to the condition \pref{eq_inters_S11} that we were seeking.

	\section*{Acknowledgement}
	Mario H. Castro was supported by: grant $\#$APQ-00474-14, FAPEMIG and CNPq/Brazil.
	\\
		Eugenio Massa was  supported by: grant $\#$2014/25398-0, São Paulo Research Foundation (FAPESP) and  grant $\#$308354/2014-1, CNPq/Brazil. 
	\\
	 Ana P. Peron was supported by: grants $\#$2016/03015-7, $\#$2016/09906-0 and $\#$2014/25796-5, S\~ao Paulo Research Foundation (FAPESP).

\providecommand{\bysame}{\leavevmode\hbox to3em{\hrulefill}\thinspace}

\end{document}